\newtheorem{theorem}{Theorem}
\newtheorem{lemma}[theorem]{Lemma}
\newtheorem{proposition}[theorem]{Proposition}
\newlength{\saveparindent}
\def\bproof{\begin{rm}\protect\vspace{5pt}\noindent\textbf{Proof: }%
\addtolength{\parskip}{4pt}\setlength{\parindent}{0pt}}
\def\eproof{\end{rm}\addtolength{\parskip}{-4pt}%
\setlength{\parindent}{\saveparindent}}
\newcommand{\bprooff}[1]{\begin{rm}\protect\vspace{5pt}%
\noindent\textbf{Proof of #1: }\addtolength{\parskip}{4pt}%
\setlength{\parindent}{0pt}}
\newenvironment{proof}{\par\bproof}{\eproof\(\qed\) \par\medskip}
\newenvironment{prooff}[1]{\par\bprooff{#1}}{\eproof\(\qed\)\par}
\newcommand{\qed}{\quad\mbox{\rule{7pt}{7pt}}}
\newcommand{\NN}{\mathbb{N}}
\newcommand{\rmC}{\mathrm{C}}
\newcommand{\rmE}{\mathrm{E}}
\newcommand{\rmG}{\mathrm{G}}
\newcommand{\rmF}{\mathrm{F}}
\newcommand{\rmR}{\mathrm{R}}
\newcommand{\rmT}{\mathrm{T}}
\newcommand{\rmV}{\mathrm{V}}
\newcommand{\rmW}{\mathrm{W}}
\newcommand{\rmZ}{\mathrm{Z}}
\newcommand{\rmS}{\mathrm{S}}
\newcommand{\calR}{\mathcal{R}}
\newcommand{\calM}{\mathcal{M}}
\newcommand{\calW}{\mathcal{W}}
\newcommand{\calZ}{\mathcal{Z}}
\newcommand{\degen}[1]{\mathrm{s}(#1)}
\newcommand{\inner}[1]{\mathrm{I}(#1)}
\newcommand{\outter}[1]{\mathrm{O}(#1)}
\newcommand{\bottom}[1]{\lfloor #1\rfloor}
\newcommand{\topedge}[1]{\lceil #1\rceil}
\newcommand{\vv}{\textit{\textbf{v}}}
\newcommand{\one}{\textbf{\textsc{1}}}
\def\ifpdf\input{#.pdf_t}\else\input{#.pstex_t}\fi1{\ifpdf\input{#1.pdf_t}\else\input{#1.pstex_t}\fi}
\title{Satisfying states of triangulations of a convex $n$-gon }
\author{
  A.~Jim\'enez\thanks{
  Depto.~Ing.~Matem\'{a}tica, U.~Chile.
  Email: \texttt{ajimenez@dim.uchile.cl}.
  Gratefully acknowledges the support of Mecesup via UCH0607 Project,
    CONICYT via Basal-FONDAP in Applied Mathematics,
    FONDECYT 1090227 and the partial support of the Czech Research Grant MSM 0021620838 while
visiting KAM MFF UK.}
\and
  M.~Kiwi\thanks{
Depto.~Ing.~Matem\'{a}tica \&
  Ctr.~Modelamiento Matem\'atico UMI 2807, U.~Chile.
  Web: \texttt{www.dim.uchile.cl/$\sim$mkiwi}.
  Gratefully acknowledges the support of
    CONICYT via Basal-FONDAP in Applied Mathematics and
    FONDECYT 1090227.}
  \and
  M.~Loebl\thanks{Department of Applied Mathematics and 
  Institute for Theorical Computer Science, Charles University.
  Web: \texttt{kam.mff.cuni.cz/$\sim$loebl/}.
  Partially supported by Basal project Ctr.~Modelamiento Matem\'atico, U.~Chile.}
}
\begin{document}

\maketitle
\begin{abstract}
In this work we count the number of satisfying states of triangulations of a convex $n$-gon
using the transfer matrix method. 
We show an exponential (in $n$) lower bound. 
We also give the exact formula for the number of satisfying states of a
  strip of triangles.
\end{abstract}

\section{Introduction}\label{sec1} 
A classic theorem of Petersen claims that every cubic (each degree
 $3$) graph with no cutedge has a perfect matching. 
A well-known conjecture of Lovasz and Plummer from the mid-1970's,
 still open, asserts that for every cubic graph $\rmG$ with no
 cutedge, the number of perfect matchings of $\rmG$ is exponential in
 $|\rmV(\rmG)|$. 
The assertion of the conjecture was proved for the
 $k-$regular bipartite graphs by Schrijver~\cite{S}~ and for the
 planar graphs by Chudnovsky and Seymour~\cite{CS}. 
Both of these results are difficult.  
In general, the conjecture is widely open;
 see~\cite{KSS} for a linear lower bound obtained so far.

We suggest to study the conjecture of Lovasz and Plummer in the dual
setting. This relates the conjecture to a phenomenon well-known in
statistical physics, namely to the degeneracy of the Ising model on
totally frustrated triangulations of $2-$dimensional surfaces.

In order to explain this we need to start with another well-known
  conjecture, namely the directed cycle double cover conjecture of
  Jaeger (see~\cite{J}):
  \emph{Every cubic graph with no
  cutedge may be embedded to an orientable surface so that each face is
  homeomorphic to an open disc (i.e., the embedding defines a map) and
  the geometric dual has no loop.}

By a slight abuse of notation we say that a map in a $2-$dimensional 
  surface is a \emph{triangulation} if each face is bounded by a cycle 
  of length $3$ (in particular there is no loop); hence we allow multiple 
  edges.
We say that a set $\rmS$ of edges of a triangulation $\rmT$ is 
  \emph{intersecting} if $\rmS$ contains exactly one edge of 
  each face of $\rmT$. 

Assuming the directed cycle double cover conjecture, we can
 reformulate the conjecture of Lovasz and Plummer as follows: \emph{Each
 triangulation has an exponential number of intersecting sets of
 edges}.

We next consider the Ising model. 
Given a triangulation
  $\rmT=(\rmV,\rmE)$, we associate the \emph{coupling constant} 
  $c(e)=-1$ with each edge $e\in \rmE$. A \emph{state} of the Ising model is
  any function $\sigma: \rmV\rightarrow \{1,-1\}$. 
The energy of a state $s$
  is defined as $-\sum_{\{u,v\}\in \rmE}c(uv)\sigma(u)\sigma(v)$, and the states
  of minimum energy are called \emph{groundstates}.
The number of groundstates is usually called the \emph{degeneracy} of
  $\rmT$, denoted 
  $\mathrm{g}{\rmT}$,
  and it is an extensively studied quantity (for regular lattices
  $\rmT$) in statistical physics. 
Moreover, a basic tool in the
  degeneracy study is the transfer matrix method.

We further say that a state $\sigma$ \emph{frustrates} edge $\{u,v\}$ if $\sigma(u)=\sigma(v)$. Clearly, each state frustrates at least one edge of each
face of $\rmT$, and a state is a groundstate if it frustrates the smallest
possible number of edges. We say that a state $\sigma$ is \emph{satisfying}
if $\sigma$ frustrates exactly one edge of each face of $\rmT$.
Hence, the set of the frustrated edges of any satisfying state is 
an intersecting set defined above, and we observe:
The number of the satisfying states is at most twice the number of the 
  intersecting sets of edges.
Moreover, the converse also holds for planar triangulations: if we
  delete an intersecting set of edges from a planar triangulation, we
  get a bipartite graph and its bipartition defines a pair of satisfying
  states.

We finally note that a satisfying state does not need to exist, but if it exists,
then the set of the satisfying states is the same as the set of the groundstates.  

Summarizing, half the number of satisfying states is a lower bound to
  the number of intersecting sets. 
We can also formulate the result of Chudnovsky and Seymour by:
  \emph{Each planar triangulation has an exponential degeneracy}.
This motivates the problem we study as well as the (transfer matrix)
  method we use.

\medskip
Given $\rmC_{n}$ a convex $n$-gon, a \emph{triangulation of $\rmC_n$} is 
  a plane graph obtained from $\rmC_n$ by adding $n-3$ new edges so that 
  $\rmC_n$ is its boundary (boundary of its outer face).
We denote by $\Delta(\rmC_{n})$ the set of all triangulations of
  $\rmC_{n}$.
An \emph{almost-triangulation} is a plane graph so
  that all its inner faces are triangles.
Note that if $n \geq 3$, then $\Delta(\rmC_{n})$ is a subset of 
  the set of almost-triangulations with $n-2$ inner faces.
For $\rmT$ an almost-triangulation, we say that a state $\sigma$ is \emph{satisfying}
if $\sigma$ frustrates exactly one edge of each triangular face of $\rmT$. We denote by $\degen{\rmT}$ the number of satisfying states of an almost-triangulation
  $\rmT$.
The main goal of this work  
  is to show that the number of satisfying states of any triangulation of 
  a convex $n$-gon is exponential in $n$.  

\medskip
\noindent\textbf{Organization:}
We first 
  recall, in Section~\ref{sub1}, a known and simple bijection between 
  triangulations of a convex $n$-gon and plane ternary trees  
  with $n-2$ internal vertices. 
We then formally state the main results of this work.
In Section~\ref{sub2} we give a constructive 
  step by step procedure that given a plane ternary tree $\Gamma$
  with $n-2$ internal vertices, sequentially builds 
  a triangulation $\rmT$ of a convex $n$-gon
  by repeatedly applying one of three different elementary operations.
Finally, in Section~\ref{sec:degen} we interpret each elementary 
  operation in terms of operations on matrices.
Then, we apply the transfer matrix method to obtain, for each
  triangulation of a convex $n$-gon $\rmT$, an expression for a matrix
  whose coordinates add up to the number of satisfying states of 
  $\rmT$. 
We then derive a closed formula for the number of satisfying states of 
  a natural subclass of $\Delta(\rmC_{n})$; the class of ``triangle
  strips''.
Finally, we establish an exponential 
  lower bound for the number of satisfying states of triangulations of a convex
  $n$-gon.
Future research directions are discussed in Section~\ref{sec:conclusion}.

\section{Structure of the class of triangulations of a convex $n$-gon}\label{sub1}
Let $\rmT$ be a triangulation of a convex $n$-gon.
Denote by $\rmF(\rmT)$ the set of inner faces of $\rmT$ and 
  let  $\{\inner{\rmT}, \outter{\rmT}\}$ be the partition of 
  $\rmF(\rmT)$ such that $\Delta \in \inner{\rmT}$ 
  if and only if no edge 
  of $\Delta$ belongs to the boundary of $\rmT$ (i.e.~to $\rmC_{n}$). 
We henceforth refer to the elements of $\inner{\rmT}$ by
  \emph{interior triangles of $\rmT$}.
Consider now the bijection $\Gamma$ between $\Delta(\rmC_{n})$ and the set
  of all plane ternary trees with $n-2$ internal vertices and $n$ leaves
  that maps $\rmT$ to $\Gamma_{\rmT}$ so that:
\begin{itemize}
\item[(i)] $\{\gamma_{\Delta},\gamma_{\Delta'}\}$ is an edge of $\Gamma_{\rmT}$
  if and only if $\Delta$ and $\Delta'$ are inner 
  faces of $\rmT$ that share an edge, and

\item[(ii)] $e$ is a leaf of $\Gamma_{\rmT}$ adjacent to $\gamma_{\Delta}$
  if and only if $e$ is an edge of $\rmC_n$ that belongs to $\Delta$. 
\end{itemize}
(See Figure~\ref{fig:bijection} for an illustration of how $\Gamma$ acts on an
  element of $\Delta(\rmC_n)$.) 
The bijection $\Gamma$ induces another bijection, say $\gamma$, 
  from the inner faces of $\rmT$ (i.e.~$\rmF(\rmT)$), to
  the internal vertices of~$\Gamma_{\rmT}$.
In particular, inner faces $\Delta$ and $\Delta'$ of $\rmT$ 
  share an edge if and only if 
  $\{\gamma_{\Delta},\gamma_{\Delta'}\}$ is an edge of $\Gamma_{\rmT}$
  which is not incident to a leaf.
Hence, $\gamma$ identifies interior triangles of $\rmT$ with 
  internal vertices of $\Gamma_{\rmT}$ that are not adjacent
  to leaves.
\begin{figure}[h]
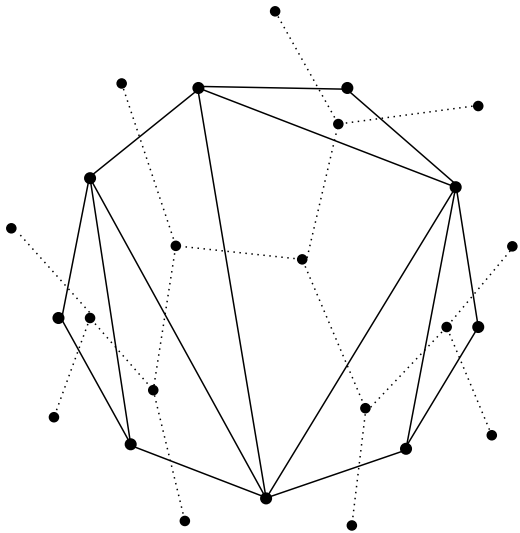

\centering
\ifpdf\input{figure1.pdf_t}\else\input{figure1.pstex_t}\fi
\caption{A triangulation of a convex $9$-gon $\rmT$ and the associated tree $\Gamma_{\rmT}$.}\label{fig:bijection}
\end{figure}

\subsection{Main results}
Say a triangulation of a convex $n$-gon $\rmT$ is a 
  \emph{strip of triangles} provided $|\inner{\rmT}|=0$.
Our first result is an exact formula for the number of satisfying states
  of any strip of triangles.
Our second main contribution gives an exponential lower bound for the
  number of satisfying states of any triangulation of a convex $n$-gon.
Specifically, denoting by $F_{k}$ the $k$-th Fibonacci number and 
  $\varphi=(1+\sqrt{5})/2 \approx 1.61803$ the golden ratio,
we establish the following results:
\begin{theorem}\label{result1} 
If $\rmT$ is a triangulation of a convex $n$-gon such that 
  $|\inner{\rmT}|=0$, then $\degen{\rmT} = 2 F_{n+1}$.
\end{theorem}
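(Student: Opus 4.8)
The plan is to set up a transfer-matrix recursion along the ``spine'' of the strip of triangles and reduce the count of satisfying states to a Fibonacci-type recurrence. First I would observe that when $|\inner{\rmT}|=0$, the associated ternary tree $\Gamma_{\rmT}$ is a \emph{caterpillar}: every internal vertex is adjacent to a leaf, so the $n-2$ internal vertices form a path $v_1,\dots,v_{n-2}$ (the spine), and correspondingly the triangles $\Delta_1,\dots,\Delta_{n-2}$ of $\rmT$ can be ordered so that consecutive triangles share exactly one edge. Label the shared edge between $\Delta_i$ and $\Delta_{i+1}$ by $e_i$, and let $e_0,e_{n-2}$ be the two ``end'' boundary edges; the remaining $n$ edges of $\rmC_n$ are the boundary edges, each lying in exactly one triangle. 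A satisfying state $\sigma$ is determined by its values on the vertices, and the key point is that once we expose $\sigma$ on the two endpoints of $e_{i-1}$, the constraint that $\Delta_i$ be frustrated on exactly one edge, together with the choice of $\sigma$ on the new vertex of $\Delta_i$, determines the admissible values of $\sigma$ on the endpoints of $e_i$.

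Concretely, I would track the $2\times 2$ matrix $M_i$ recording, for each of the four sign-patterns on the endpoints of $e_{i-1}$ and on the endpoints of $e_i$, the number of ways to extend $\sigma$ consistently across $\Delta_i$ so that $\Delta_i$ is satisfied. A short case analysis on the single triangle $\Delta_i$ (it has three edges: the ``incoming'' edge $e_{i-1}$, the ``outgoing'' edge $e_i$, and one boundary edge, and exactly one of these three must be monochromatic under $\sigma$) shows that each such local transfer is governed by a fixed matrix, essentially $A=\left(\begin{smallmatrix}1&1\\1&0\end{smallmatrix}\right)$ up to relabeling the sign-patterns (the four patterns on an edge collapse to two relevant states: ``edge monochromatic'' vs.\ ``edge bichromatic''). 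Then $\degen{\rmT}$ is obtained by multiplying these transfer matrices along the spine and summing the entries against the appropriate boundary vectors for $e_0$ and $e_{n-2}$, i.e.\ $\degen{\rmT} = u^{\top} A^{n-3} w$ for explicit vectors $u,w$, plus a factor of $2$ accounting for the global sign flip $\sigma \mapsto -\sigma$.

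Finally I would identify the resulting quantity with $2F_{n+1}$. Since $A^{k}=\left(\begin{smallmatrix}F_{k+1}&F_k\\F_k&F_{k-1}\end{smallmatrix}\right)$, the bilinear form $u^{\top}A^{n-3}w$ is a fixed integer combination of $F_{n-2},F_{n-3},F_{n-4}$, which collapses to $F_{n+1}$ by repeated use of $F_{k}=F_{k-1}+F_{k-2}$; including the factor $2$ gives $\degen{\rmT}=2F_{n+1}$. I would sanity-check the formula on the two smallest cases: for $n=3$ a single triangle has exactly $6=2F_4$ satisfying states (each of the $3$ edges can be the unique frustrated one, $2$ ways each), and for $n=4$ one checks $10=2F_5$ directly. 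The main obstacle is bookkeeping: getting the boundary conditions at the two ends of the strip exactly right, and making sure the case analysis for the single-triangle transfer is complete and that no satisfying state is double-counted — the global $\sigma\mapsto-\sigma$ symmetry is the only overcounting and must be factored out exactly once. Once the transfer matrix is pinned down to be conjugate to $A$, the Fibonacci identity is routine.
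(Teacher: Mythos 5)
Your proposal is correct and follows essentially the same transfer-matrix route as the paper, the only cosmetic difference being that you exploit the global spin-flip symmetry up front to collapse the four edge assignments to two states (monochromatic/bichromatic) and work with a $2\times 2$ Fibonacci matrix, whereas the paper keeps $4\times 4$ matrices $\calW,\calZ$ and achieves the same collapse a posteriori via the conjugation $\calW=\Pi\calZ\Pi$ together with $\Pi\cdot\one=\one$. In a full write-up you would want to make explicit the (short) check that the reduced $2\times 2$ transfer matrix is the same whether the new triangle is attached on the left (operation $\rmW$) or on the right (operation $\rmZ$) — this is the crux that lets the product collapse to a power of a single matrix, and is the analogue of the paper's $\calW=\Pi\calZ\Pi$ observation.
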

\begin{theorem}\label{result2}
If $\rmT$ is a triangulation of a convex $n$-gon, then 
  $\degen{\rmT}\geq \varphi^{2}(\sqrt{\varphi})^{n}$.
Moreover, $\sqrt{\varphi} \approx 1.27202$.
\end{theorem}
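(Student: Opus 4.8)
The plan is to build triangulations of $\rmC_n$ incrementally from plane ternary trees, mirroring the roadmap described in the Organization section, and to track satisfying states via the transfer matrix method. Concretely, I would first fix the bijection $\Gamma$ and choose, for each ternary tree $\Gamma_\rmT$, a construction order on its internal vertices (say a leaf-to-root order, or more precisely a sequence of "add a triangle" moves) so that at every stage the partially built object is an almost-triangulation with a distinguished boundary edge playing the role of the ``active'' edge. Each elementary operation then comes in three flavors depending on whether the new triangle is glued along the active edge with $0$, $1$, or $2$ of its other edges becoming new boundary edges; the point is that a satisfying state is determined by (a) the values $\sigma(u),\sigma(v)$ on the endpoints of the active edge, together with (b) a bit recording whether the active edge is currently frustrated or not, because that bit controls whether the next glued triangle is allowed to frustrate its shared edge. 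So the state of the transfer process lives in a small finite set, and each elementary operation acts as a fixed nonnegative integer matrix on the corresponding vector of counts.

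Second, I would make this precise: associate to each partially built triangulation $\rmT'$ with active edge $\{u,v\}$ a vector in $\NN^{4}$ (or whatever the right small dimension turns out to be) indexed by the possible ``interface types'' $(\sigma(u),\sigma(v),\text{frustrated?})$ modulo the global sign flip, whose entries count the satisfying states of $\rmT'$ realizing each interface type. Gluing a new triangle corresponds to left-multiplication by one of three explicit matrices $M_0, M_1, M_2$ (one per elementary operation), obtained by a small case analysis of which edge of the new triangle is the frustrated one. Composing along the construction sequence of $\Gamma_\rmT$ yields a matrix expression whose entries sum to $\degen{\rmT}$; summing the coordinates of a product of nonnegative matrices is exactly the setting in which transfer-matrix lower bounds are natural.

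Third, for Theorem~\ref{result1} (the strip of triangles, $|\inner{\rmT}|=0$), the ternary tree is a caterpillar, so the construction uses only one type of elementary operation repeated $n-3$ times, and $\degen{\rmT}$ becomes $\one^{T} M^{\,n-c}\, v_0$ for a fixed small matrix $M$ and fixed boundary vectors. I would diagonalize $M$ (expecting its nonzero eigenvalues to be $\varphi$ and $-1/\varphi$, the roots of $x^2 = x+1$) and read off a closed form; matching initial conditions should collapse it to $2F_{n+1}$, and I'd verify the base cases $n=3$ (a single triangle, $6 = 2F_4$) and $n=4$ directly. For Theorem~\ref{result2}, the key observation is that $\degen{\rmT} = \one^T (\prod_i M_{t_i}) v_0$ where each $M_{t_i} \in \{M_0,M_1,M_2\}$, and that every one of the three matrices is entrywise nonnegative and ``spectrally large enough'': the cleanest route is to exhibit a single vector $w$ of positive reals with $M_t\, w \ge \sqrt{\varphi}\; w$ entrywise for every $t \in \{0,1,2\}$, so that $\prod_i M_{t_i}\, v_0 \ge (\sqrt{\varphi})^{\,n - O(1)} w$ after bounding $v_0$ below by a multiple of $w$, and then sum coordinates, absorbing the $O(1)$ and the leftover constant into the $\varphi^2$ prefactor. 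Equivalently one shows each $M_t$ has its relevant Perron-type value at least $\sqrt{\varphi}$ and that there is a common sub-eigenvector.

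The main obstacle I anticipate is not the linear algebra but getting the transfer setup exactly right: one must verify that the interface type $(\sigma(u),\sigma(v),\text{frustrated?})$ is genuinely a sufficient statistic — i.e.\ that no satisfying state constraint ``reaches back'' past the active edge — and that the three elementary operations from Section~\ref{sub2} really do suffice to build every triangulation while keeping a single well-defined active edge. The combinatorial heart of Theorem~\ref{result2} is then the uniform bound $M_t w \ge \sqrt{\varphi}\, w$: since $M_1$ (the operation that neither creates nor destroys boundary edges, gluing two sides at once) is the ``worst'' matrix, I expect $\sqrt{\varphi}$ to be forced precisely by that matrix, and finding the common positive sub-eigenvector $w$ that works simultaneously for $M_0, M_1, M_2$ will require the one genuinely clever guess in the argument. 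Everything else — the $\varphi^2$ constant, the base cases, the identification with $2F_{n+1}$ in the strip case — should follow by bookkeeping.
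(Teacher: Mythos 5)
Your proposal has a genuine gap at the transfer-matrix setup: an arbitrary triangulation of a convex $n$-gon cannot be built by gluing one triangle at a time onto a single active edge using operations that act as matrix multiplications on a $4$-dimensional interface vector. The dual structure $\Gamma_{\rmT}$ is a plane ternary tree, and whenever it branches --- i.e.\ whenever $|\inner{\rmT}|\ge 1$ --- a post-order construction must at some step merge two already-built sub-triangulations across an interior triangle. This merge is the paper's operation $\bullet$, and Proposition~\ref{hola} shows that its action on satisfying vectors is genuinely \emph{bilinear}: $\vv_{\,\rmT_1\bullet\rmT_2}=\vv_{\,\rmT_1}\bullet\vv_{\,\rmT_2}$. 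No matrix acting on a single $4$-vector realizes this. Phrased in your language, the $M_1$-type move (new triangle sharing the active edge plus one more existing boundary edge) requires the spin of a boundary vertex that is not on the active edge, so $(\sigma(u),\sigma(v))$ on the active edge is \emph{not} a sufficient statistic once the tree branches. You flag exactly this worry, but it is fatal to the matrix-product framing rather than a bookkeeping detail: for the hexagon whose only interior face is $\{1,3,5\}$, Lemma~\ref{strip} already shows no $\rmW/\rmZ$-only sequence builds it, and there is no way to reach the third outer triangle from a single active edge without a merge.

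Because the merge is bilinear, the common-sub-eigenvector strategy (find $w>0$ with $M_t w\ge\sqrt{\varphi}\,w$ for each operation type) does not transfer to the final argument. The paper instead proves the stronger claim $\degen{\rmT}\ge\varphi^{\,n-|\inner{\rmT}|}$ by induction on $|\inner{\rmT}|$, using Lemma~\ref{mainlemma} to peel off a strip $\widetilde{\rmT}$ and a smaller $\widehat{\rmT}$ joined by a single $\bullet$, Theorem~\ref{caso2} for the Fibonacci satisfying vector of the strip part, Proposition~\ref{hola} for the bilinear merge, and a direct Fibonacci manipulation to close the induction. The prefactor $\varphi^2$ does not come from absorbing $O(1)$ constants: it is the combinatorial bound $|\inner{\rmT}|\le n/2-2$ (Lemma~\ref{o2}), giving $\varphi^{\,n-|\inner{\rmT}|}\ge\varphi^{\,n/2+2}=\varphi^2(\sqrt{\varphi})^n$. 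To salvage your plan you would need an inductive hypothesis strong enough to propagate across the bilinear $\bullet$ step, which amounts to rediscovering the paper's induction on the number of interior triangles.
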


\section{Construction of triangulations of a convex $n$-gon}\label{sub2}
In this section we discuss how to iteratively construct any triangulation
  of a convex $n$-gon.
First, we introduce two basic operations whose repeated application
  allows one to build strips of triangles.
Then, we describe a third operation which is crucial for
  recursively building triangulations 
  with a non-empty set of interior triangles 
  from triangulations with fewer interior triangles.

\subsection{Basic operations}
Let $\rmT=(\rmV,\rmE)$ be a triangulation of a convex $n$-gon.
We will often distinguish a boundary edge of $\rmT$ to which we shall
  refer as \emph{bottom edge of $\rmT$} and denote by 
  $\bottom{\rmT}$.

We now define two elementary operations (see Figure~\ref{fig:top} for an 
illustration):
\begin{quote}
\begin{tabular}{lp{5in}}
\multicolumn{2}{l}{\textbf{Operation $\rmW$}} \\
Input: & $(\rmT,\bottom{\rmT})$ where
  $\rmT\in\Delta(\rmC_n)$ and $\bottom{\rmT}=(\beta_1,\beta_2)$. \\
Output: & $(\widehat{\rmT},\bottom{\widehat{\rmT}})$, 
  where $\widehat{\rmT}\in\Delta(\rmC_{n+1})$ 
  is a triangulation obtained from $\rmT$ by 
  adding a new vertex $\widehat{\beta}_{1}$ to $\rmT$ and  
  two new edges $\{\widehat{\beta}_{1}, \beta_1\}$ and 
  $\{\widehat{\beta}_{1}, \beta_2\}$. 
Moreover, $\bottom{\widehat{\rmT}}$ =
  $(\widehat{\beta}_{1},\beta_2)$.
\end{tabular}

\begin{tabular}{lp{5in}}
\multicolumn{2}{l}{\textbf{Operation $\rmZ$}} \\
Input: & $(\rmT,\bottom{\rmT})$ where 
  $\rmT\in\Delta(\rmC_n)$ and $\bottom{\rmT}=(\beta_1,\beta_2)$. \\
Output: &  $(\widehat{\rmT},\bottom{\widehat{\rmT}})$, 
  where $\widehat{\rmT}\in\Delta(\rmC_{n+1})$ 
  is a triangulation obtained from
  $\rmT$ by adding a new vertex $\widehat{\beta}_{2}$ to $\rmT$ and
  two new edges $\{\beta_1,\widehat{\beta}_{2}\}$ and 
  $\{\widehat{\beta}_{2}, \beta_2\}$. 
Moreover, $\bottom{\widehat{\rmT}} = (\beta_1,\widehat{\beta}_{2})$.
\end{tabular}
\end{quote}
Henceforth, we also view operations $\rmW$ and $\rmZ$ as 
  maps from inputs to outputs.
Abusing terminology, we consider two nodes joined by an edge 
  to be a \emph{degenerate triangulation} whose bottom edge is its
  unique edge.
Let $\rmT_0$ be a degenerate triangulation. Say that $\bottom{\rmT_0}$ is the \emph{top edge of $\rmT$},
  denoted $\topedge{\rmT}$, if
  there is a sequence $\rmR_1,\ldots,\rmR_l\in\{\rmW,\rmZ\}$
  such that $(\rmT,\bottom{\rmT})$ is obtained by evaluating 
  $\rmR_l\circ\cdots\circ\rmR_2\circ\rmR_1$ at 
  $(\rmT_0,\bottom{\rmT_0})$.
When bottom edges are clear from context, we shall simply write
\begin{equation*}
\rmT = \rmR_l\circ\cdots\circ\rmR_2\circ\rmR_1(\rmT_0)\,.
\end{equation*}

\begin{figure}[h]
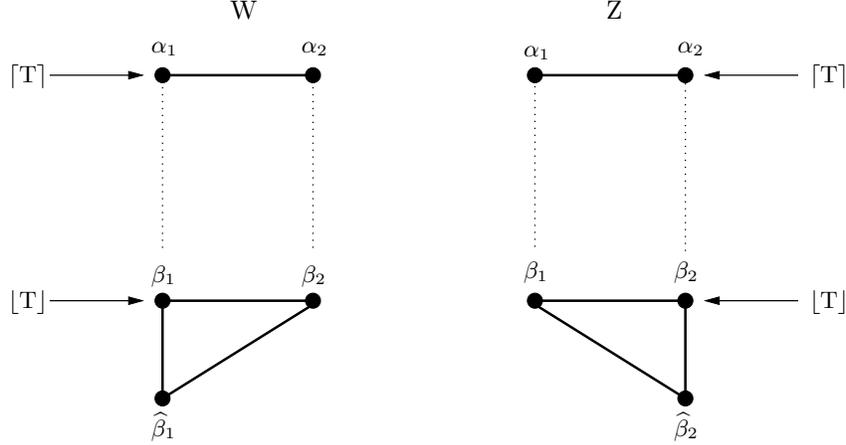

\centering
\ifpdf\input{figure2.pdf_t}\else\input{figure2.pstex_t}\fi\caption{An arbitrary strip of triangles $\rmT$. Operations $\rmW$ and $\rmZ$ evaluated at $(\rmT, \bottom{\rmT})$.}\label{fig:top}
\end{figure}

\subsection{The $|\inner{\rmT}| = 0$ case}
Our goal in this section is to show that 
  any triangulation of a convex $n$-gon with no interior
  triangles can be obtained by sequentially applying basic operations 
  of type $\rmW$ and $\rmZ$ starting from a degenerate triangulation.

Let $\rmT$ be a triangulation such that $|\inner{\rmT}|=0$.
Note that each internal vertex of $\Gamma_{\rmT}$ is adjacent to at 
  least one leaf. 
Hence, $\Gamma_{\rmT}$ has two internal vertices each one adjacent
  to exactly two leaves, and $n-4$ internal vertices adjacent to 
  exactly one leaf. 
This implies that $\Gamma_{\rmT}$ is made up of a
  path $P = \gamma_{\Delta^1} \ldots \gamma_{\Delta^{n-2}}$ with
  two leaves connected to each $\gamma_{\Delta^1}$ and
  $\gamma_{\Delta^{n-2}}$, and one leaf connected to each internal
  vertex of the path $P$ (see Figure~\ref{fig:path}). 
To obtain $\rmT$ from $\Gamma_{\rmT}$ we choose one of the 
  two endnodes of the path (say $\gamma_{\Delta^1}$) 
  and sequentially add the triangles $\Delta^1,\ldots,\Delta^{n-2}$ 
  one by one, according to the bijection $\gamma$, starting
  from $\gamma_{\Delta^1}$ and following the trajectory of the 
  path~$P$.
Consequently, we can construct $\rmT$ from a pair of vertices 
  $(\alpha_1,\alpha_2)$ of $\Delta^1$ by applying a sequence 
  of $n-2$ operations $\rmR_{1}, \rmR_{2}, \ldots,
  \rmR_{n-2}\in\{\rmW,\rmZ\}$, where the
  choice of each operation depends on the structure of
  $\Gamma_{\rmT}$. 
For example, for the triangulation in Figure~\ref{fig:path}, 
  provided $\topedge{\rmT}=(\alpha_{1},\alpha_{2})$ 
  and $\bottom{\rmT}=(\beta_{1},\beta_{2})$, we have that 
  $\rmR_1=\rmW$, $\rmR_2=\rmZ$, $\rmR_3=\rmZ$, and so on and so forth.

\begin{figure}[h]
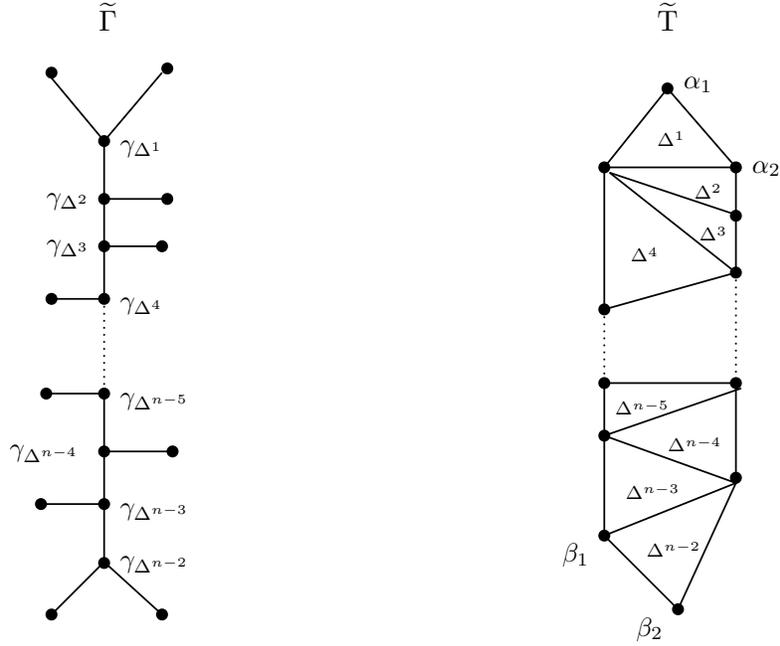

\centering
\ifpdf\input{figura3.pdf_t}\else\input{figura3.pstex_t}\fi
\caption{A tree $\widetilde{\Gamma}$ in the range of bijection
  $\Gamma$ and construction of triangulation $\widetilde{\rmT}$ 
  such that $\Gamma_{\widetilde{\rmT}}=\widetilde{\Gamma}$.}\label{fig:path}
\end{figure}

The next result summarizes the conclusion of the previous discussion.
\begin{lemma}\label{strip} 
For any $\rmT \in \Delta(\rmC_{n})$ it holds that 
 $|\inner{\rmT}|=0$ 
  if and only if there is a degenerate triangulation $\rmT_0$
  and basic operations 
  $\rmR_{1}, \rmR_{2}, \ldots,\rmR_{n-2}\in\{\rmW,\rmZ\}$ 
  such that
\begin{equation*} 
\rmT
  =\rmR_{n-2}\circ\cdots\circ\rmR_{2}\circ\rmR_{1}(\rmT_0)\,.
\end{equation*}
In fact, there are non--negative integers  
  $w_1, \ldots, w_{m}, z_1, \ldots, z_{m}$ adding up to $n-2$
  such that $w_j \geq 1$ for $j\neq 1$,
  $z_j \geq 1$ for $j\neq m$, and 
\begin{equation*}
\rmT=
  \rmZ^{z_m}\circ\rmW^{w_m}\circ
  \cdots\circ\rmZ^{z_{2}}\circ\rmW^{w_{2}}\circ\rmZ^{z_1}\circ\rmW^{w_1}
  (\rmT_0)\,.
\end{equation*}
\end{lemma}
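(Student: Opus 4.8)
The plan is to prove both directions of the equivalence, with the bulk of the work going into the forward direction ($|\inner{\rmT}|=0 \Rightarrow$ constructibility), and then to upgrade the sequence $\rmR_1,\ldots,\rmR_{n-2}$ to the block form $\rmZ^{z_m}\circ\rmW^{w_m}\circ\cdots\circ\rmW^{w_1}$ by a purely syntactic regrouping. First I would establish the structural claim about $\Gamma_{\rmT}$ already sketched in the text: since $|\inner{\rmT}|=0$, the bijection $\gamma$ forces every internal vertex of $\Gamma_{\rmT}$ to be adjacent to at least one leaf; a counting argument on the ternary tree (it has $n-2$ internal vertices, $n$ leaves, and $2(n-2)-(n-3)=n-1$ internal--internal incidences counted suitably) pins down that exactly two internal vertices are adjacent to two leaves and the remaining $n-4$ to exactly one, so $\Gamma_{\rmT}$ is a caterpillar: a path $P=\gamma_{\Delta^1}\cdots\gamma_{\Delta^{n-2}}$ with pendant leaves. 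This is the combinatorial heart and I'd want to state it as the key lemma feeding the proof.

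Next I would make precise the "peeling" procedure. Fix an endpoint $\gamma_{\Delta^1}$ of $P$ and let $\topedge{\rmT}=(\alpha_1,\alpha_2)$ be the edge of $\Delta^1$ that is \emph{not} shared with $\Delta^2$ (it is a boundary edge of $\rmC_n$, since $\Delta^1$ meets two leaves). I claim that stripping the triangles $\Delta^{n-2},\Delta^{n-3},\ldots$ off $\rmT$ in reverse order of $P$ is exactly an inverse sequence of $\rmW$/$\rmZ$ moves. The argument is by induction on $n$: the triangle $\Delta^{n-2}$ sits at the other end of the caterpillar, shares a single edge with $\Delta^{n-3}$, and its two other edges lie on $\rmC_n$; removing its apex vertex together with those two boundary edges yields a triangulation $\rmT'$ of a convex $(n-1)$-gon whose tree $\Gamma_{\rmT'}$ is the caterpillar $P$ minus its last vertex, hence still has $|\inner{\rmT'}|=0$, and whose bottom edge is the edge $\Delta^{n-2}\cap\Delta^{n-3}$. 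By the induction hypothesis $\rmT'=\rmR_{n-3}\circ\cdots\circ\rmR_1(\rmT_0)$; and depending on which "side" of $\bottom{\rmT'}$ the removed apex sat on — determined by whether $\gamma_{\Delta^{n-2}}$ attaches to $\gamma_{\Delta^{n-3}}$ as a "left" or "right" child, equivalently by the local rotation at $\gamma_{\Delta^{n-3}}$ in the plane tree — the reattachment is precisely one application of $\rmW$ or of $\rmZ$, giving $\rmT=\rmR_{n-2}\circ\rmR_{n-3}\circ\cdots\circ\rmR_1(\rmT_0)$. The base case $n=3$ is a single triangle built from a degenerate $\rmT_0$ by one move. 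The converse direction is easy and I would dispatch it quickly: each of $\rmW$ and $\rmZ$ adds a vertex whose two new edges both lie on the new boundary, so the new triangle has two boundary edges, hence contributes a pendant-leaf internal vertex to the tree; an induction then shows $\Gamma_{\rmT}$ stays a caterpillar and $|\inner{\rmT}|=0$ is preserved.

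Finally, for the "In fact" refinement: given any sequence $\rmR_1,\ldots,\rmR_{n-2}\in\{\rmW,\rmZ\}$, group it into maximal runs of equal letters. Writing it so that a (possibly empty) block of $\rmW$'s comes first, we get the word $\rmZ^{z_m}\rmW^{w_m}\cdots\rmZ^{z_1}\rmW^{w_1}$ with $w_1,z_m\ge 0$ and all other exponents $\ge 1$; if the original word starts with $\rmZ$ we take $w_1=0$, and if it ends with $\rmW$ we take $z_m=0$, matching exactly the stated constraints $w_j\ge 1$ for $j\ne 1$ and $z_j\ge 1$ for $j\ne m$. The exponents visibly sum to $n-2$.

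The main obstacle I anticipate is the bookkeeping in the inductive step of the forward direction: one must correctly identify, from the plane-tree structure (the cyclic order of children at each internal vertex), whether peeling $\Delta^{n-2}$ corresponds to undoing a $\rmW$ or a $\rmZ$, and verify that the bottom edge of the reduced triangulation is the one the induction hypothesis expects. Everything else is routine induction plus the combinatorial caterpillar lemma, which the paper has essentially already argued in the discussion preceding the statement.
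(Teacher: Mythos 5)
Your proposal is correct and follows essentially the same route as the paper's discussion preceding the lemma: establish the caterpillar structure of $\Gamma_{\rmT}$ when $|\inner{\rmT}|=0$ and then realize $\rmT$ triangle-by-triangle along the spine, reading off a $\rmW$ or $\rmZ$ at each step; your inductive peeling is a more explicit version of the paper's constructive sketch, and your treatment of the converse direction is a welcome addition the paper leaves implicit. One small slip: the parenthetical count $2(n-2)-(n-3)=n-1$ does not give the number of internal--internal incidences (that number is $2(n-3)$); the correct tally is $3(n-2)-2(n-3)=n$ leaves spread over $n-2$ spine vertices each carrying at least one, which forces exactly two spine vertices to carry two leaves -- and that is the statement your argument actually uses.
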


\subsection{The $|\inner{\rmT}| \geq 1$ case}
We now consider the following additional basic operation
  (see Figure~\ref{fig:bullet} for an illustration):
\begin{quote}
\begin{tabular}{lp{5in}}
\multicolumn{2}{l}{\textbf{Operation $\bullet$}} \\
Input: & $(\rmT_i,\bottom{\rmT_i})$ where 
  $\rmT_i\in\Delta(\rmC_{n_i})$, $i\in\{1,2\}$
  and $\bottom{\rmT_i}=(\beta^i_1,\beta^i_2)$. \\
Output: &  $(\rmT,\bottom{\rmT})$, 
  where $\rmT\in\Delta(\rmC_{n_1+n_2-1})$ 
  is a triangulation obtained from
  $\rmT_1$ and $\rmT_2$ by identifying $\beta_1^2$ with 
  $\beta_2^1$ and adding the edge $\{\beta_1^1,\beta_2^2\}$.
Moreover, $\bottom{\rmT} = (\beta_{1}^1,\beta_{2}^2)$.
\end{tabular}
\end{quote}

\begin{figure}[h]
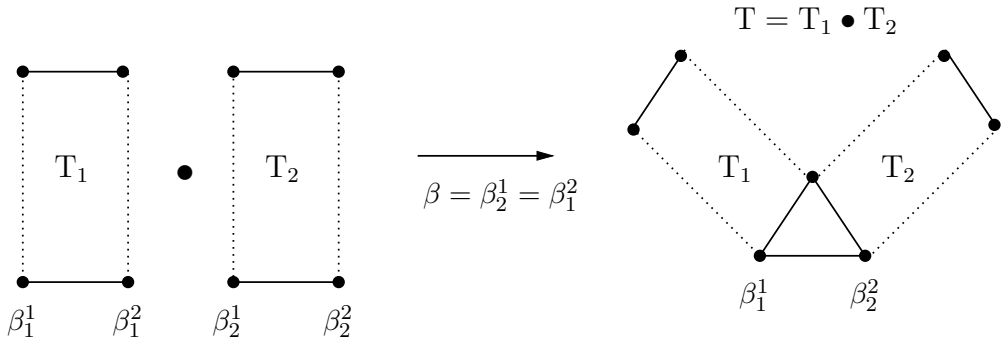

\centering
\ifpdf\input{figura4.pdf_t}\else\input{figura4.pstex_t}\fi\caption{Building an interior triangle by means of operation $\bullet$.}\label{fig:bullet}
\end{figure}

Assume $\rmT$ is such that 
  $|\inner{\widetilde{\rmT}}|=1$.
In particular, let $\inner{\rmT} = \{\Delta\}$. 
Clearly, the tree $\Gamma_{\rmT}$ contains exactly one internal 
  vertex that is not adjacent to a leaf.
Hence, in $\Gamma_{\rmT}$ there must be three internal vertices each
  of them adjacent to two leaves, and $n-6$ internal vertices 
  adjacent to exactly one leaf. 
Thus, we can identify in $\Gamma_{\rmT}$ three paths
  $P_1 = \gamma_{\Delta_{1}^1} \ldots\gamma_{{\Delta_{n_1}^1}}$,
  $P_2 = \gamma_{\Delta_{1}^2} \ldots\gamma_{{\Delta_{n_2}^2}}$,
  and $P_3 = \gamma_{\Delta_{n_3}^3}\ldots\gamma_{{\Delta_{1}^3}}$
  with end-vertices 
  $\gamma_{{\Delta_{n_1}^1}} = \gamma_{{\Delta_{n_2}^2}} =
    \gamma_{{\Delta_{n_3}^3}}=\gamma_{\Delta}$,
  and such that: 
  (1) $n_1 + n_2 + n_3 = n$ and $n_1, n_2, n_3\geq 2$,
  (2) each $\gamma_{{\Delta_{1}^j}}$ with $j \in \{1,2,3\}$ 
  is adjacent to two leaves of $\Gamma_\rmT$, and
  (3) each $\gamma_{{\Delta_{i_j}^j}}$ with 
  $j \in \{1,2,3\}$ and $i_j \in \{2, \ldots, n_j-1\}$ is adjacent
  to a single leaf of $\Gamma_{\rmT}$.

Given $\Gamma_\rmT$, we can construct $\rmT$
  by means of the following iterative step by step procedure:

\begin{enumerate}
\item For $i\in\{1,2\}$, add triangles 
  $\Delta_{1}^i,\ldots,\Delta_{n_i-1}^i$
  according to the bijection following the trajectory 
  from $\gamma_{\Delta_{1}^i}$ to $\gamma_{\Delta_{n_i-1}^i}$ 
  given by $P_i$, thus obtaining a triangulation $\rmT_i$
  such that $\Gamma_{\rmT_i}$ is the minimal subtree of 
  $\Gamma_{\rmT}$ containing $P_i\setminus\gamma_{\Delta}$.
Moreover, note that $\rmT_i\in\Delta(\rmC_{n_i+1})$
  is such that $|\inner{\rmT_i}| = 0$, and that there is 
  a degenerate triangulation $\rmT_{i,0}$ which is an edge 
  of triangle $\Delta_1^i$, and basic 
  operations $\rmR_{1}^i,\ldots,\rmR_{n_i-1}^i\in\{\rmW,\rmZ\}$ such that
\begin{equation*}
\rmT_i = \rmR_{n_i-1}^i\circ\ldots\circ\rmR_{2}^i\circ\rmR_{1}^i(\rmT_{i,0})\,.
\end{equation*}
Also, note that $\bottom{\rmT_i}$ is an edge of $\Delta^i_{n_i-1}$.

\begin{figure}[h]
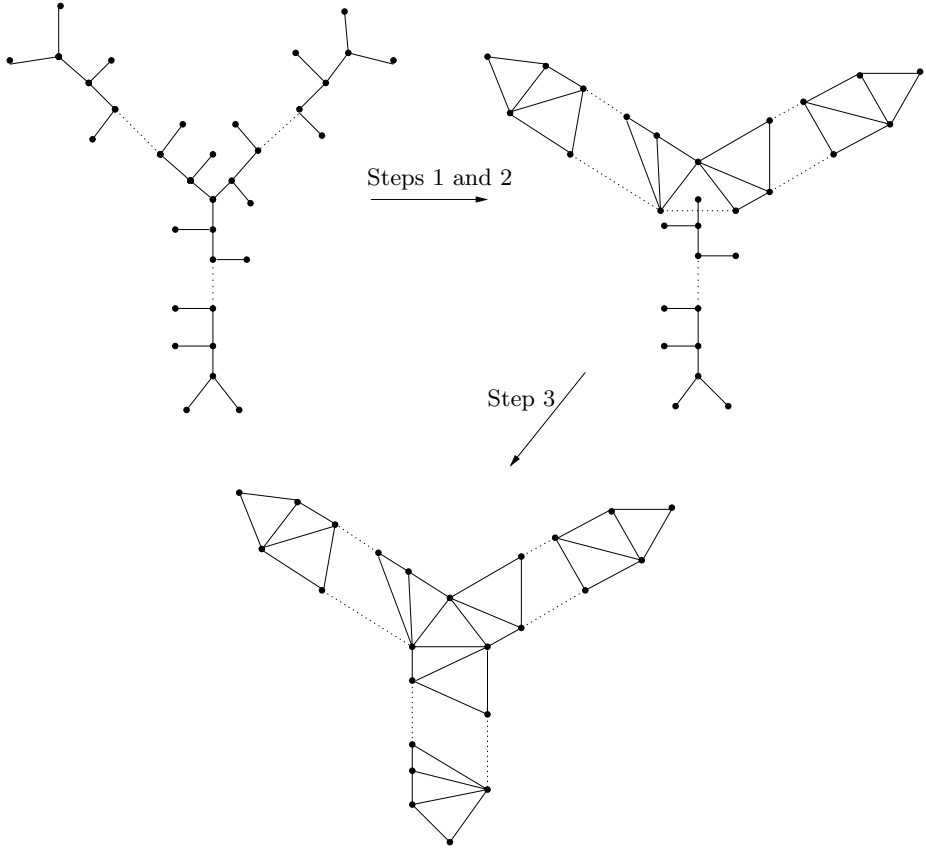

\centering
\ifpdf\input{figure5.pdf_t}\else\input{figure5.pstex_t}\fi\caption{Sketch of construction of an arbitrary $\rmT$ with $|\inner{\rmT}|=1$. }\label{fig:iterProcedure}
\end{figure}

\item Apply operation $\bullet$ in order to construct
  $\widehat{\rmT}=\rmT_1\bullet\rmT_2\in\Delta(\rmC_{n_1+n_2+1})$. 
  Note that $\Delta\in\rmF(\widehat{\rmT})$ and $\lfloor\widehat{\rmT}\rfloor$
  is the unique edge of $\Delta$
  which is in the boundary of $\widehat{\rmT}$.

\item Finally, starting from $\widehat{\rmT}$ add
   triangles associated to vertices of the path $P_3$. 
This is done by performing a sequence of $n_3-1$ 
  operations $\rmW$ and $\rmZ$ along $P_3\setminus\gamma_{\Delta}$
  starting from $(\widehat{\rmT},\bottom{\widehat{\rmT}})$. 
Given that $\widehat{\rmT}\in\Delta(\rmC_{n_1+n_2+1})$, 
  we obtain $\rmT\in\Delta(\rmC_{n_1+n_2+n_3})$
  (recall that $n_1+n_2+n_3=n$).
\end{enumerate}

We summarize the previous discussion as follows:
\begin{lemma}\label{lemma2}
Let $\rmT$ be a triangulation of a convex $n$-gon
  such that $|\inner{\rmT}|=1$. 
For some $n_1,n_2,n_3\geq 2$ such that 
  $n_1 + n_2 + n_3=n$, there are triangulations 
  $\rmT_1$ and $\rmT_2$ of convex $(n_1+1)$ and $(n_2+1)$-gons
  such that $|\inner{\rmT_1}|=|\inner{\rmT_2}|=0$, 
  and basic operations $\rmR_1,\ldots,\rmR_{n_3-1}\in\{\rmW,\rmZ\}$
  such that 
\begin{equation*}
\rmT =  \rmR_{{n_3-1}}\circ\cdots\circ\rmR_{{2}}\circ
  \rmR_{{1}}(\rmT_1 \bullet \rmT_2)\,.
\end{equation*}
\end{lemma}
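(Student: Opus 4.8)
The statement only records the construction already sketched, so the plan is to turn that sketch into a proof organised around the bijection $\Gamma$ of Section~\ref{sub1} and around Lemma~\ref{strip}.

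\textbf{Structure of $\Gamma_\rmT$.} First I would translate the hypothesis $|\inner{\rmT}|=1$ into a statement about $\Gamma_\rmT$. By the properties of $\gamma$ recalled in Section~\ref{sub1}, interior triangles of $\rmT$ correspond to internal vertices of $\Gamma_\rmT$ not adjacent to a leaf; hence $\Gamma_\rmT$ has a unique such vertex, namely $\gamma_\Delta$, where $\inner{\rmT}=\{\Delta\}$. Since $\Gamma_\rmT$ is ternary, $\gamma_\Delta$ has degree $3$ and all three of its neighbours are internal. Deleting $\gamma_\Delta$ leaves three subtrees, and since every internal vertex other than $\gamma_\Delta$ is adjacent to a leaf, each subtree is a path with pendant leaves, exactly as in the $|\inner{\rmT}|=0$ analysis. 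A leaf-count / degree-sum argument then produces the three paths $P_1,P_2,P_3$ ending at $\gamma_\Delta$, together with the conditions $n_1,n_2,n_3\ge 2$, $n_1+n_2+n_3=n$, and the leaf-adjacency pattern (1)--(3) stated before the lemma.

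\textbf{Building $\rmT_1$, $\rmT_2$, and gluing.} For $i\in\{1,2\}$ I would apply the reasoning behind Lemma~\ref{strip} to the subtree spanned by $P_i\setminus\gamma_\Delta$: taking a degenerate triangulation $\rmT_{i,0}$ that is an edge of $\Delta_1^i$ and a sequence $\rmR_1^i,\dots,\rmR_{n_i-1}^i\in\{\rmW,\rmZ\}$ that reads off, vertex by vertex along $P_i$, on which side the pendant leaf lies, yields a strip $\rmT_i\in\Delta(\rmC_{n_i+1})$ with $|\inner{\rmT_i}|=0$, with $\Gamma_{\rmT_i}$ equal to that subtree, and with $\bottom{\rmT_i}$ an edge of $\Delta_{n_i-1}^i$. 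Then form $\widehat{\rmT}=\rmT_1\bullet\rmT_2\in\Delta(\rmC_{n_1+n_2+1})$. The triangle created by $\bullet$ has edges $\bottom{\rmT_1}$, $\bottom{\rmT_2}$ and the new boundary edge $\{\beta_1^1,\beta_2^2\}=\bottom{\widehat{\rmT}}$; identifying it with $\Delta$, one checks that $\Gamma_{\widehat{\rmT}}$ is exactly the subtree of $\Gamma_\rmT$ obtained by deleting $P_3\setminus\gamma_\Delta$, and that $\bottom{\widehat{\rmT}}$ is the unique boundary edge of $\Delta$ in $\widehat{\rmT}$.

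\textbf{The third arm and conclusion.} Finally I would apply $n_3-1$ operations $\rmR_1,\dots,\rmR_{n_3-1}\in\{\rmW,\rmZ\}$, chosen to trace $P_3$ outward from $\gamma_\Delta$, to $(\widehat{\rmT},\bottom{\widehat{\rmT}})$. The first operation turns $\bottom{\widehat{\rmT}}$ into an interior edge, so $\Delta$ becomes an interior triangle; and since $\rmW$ and $\rmZ$ create no interior triangles, $|\inner{\cdot}|$ stays equal to $1$ throughout. Tracking the associated trees shows the resulting triangulation has associated tree $\Gamma_\rmT$, so by injectivity of $\Gamma$ it equals $\rmT$, giving $\rmT=\rmR_{n_3-1}\circ\cdots\circ\rmR_1(\rmT_1\bullet\rmT_2)$. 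The hard part is the bookkeeping, already implicit in Lemma~\ref{strip}: checking that the left/right data at each spine vertex of $P_1$, $P_2$, $P_3$ is faithfully encoded by the choice of $\rmW$ versus $\rmZ$ and the orientation of the starting degenerate triangulation, and that the identification of $\bottom{\rmT_1}$ with $\bottom{\rmT_2}$ in $\bullet$ respects the planar cyclic order so that the glued triangle is genuinely a face with exactly one boundary edge. Once this is set up, the argument is a routine induction on the lengths of the three arms, driven entirely by the bijection $\Gamma$.
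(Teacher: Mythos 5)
Your proposal is correct and follows essentially the same route as the paper: the paper does not give a formal proof of Lemma~\ref{lemma2} but rather states it as a summary of the three-step constructive discussion that precedes it (decompose $\Gamma_\rmT$ at the unique internal vertex $\gamma_\Delta$ not adjacent to a leaf into three caterpillar arms $P_1,P_2,P_3$, build $\rmT_1,\rmT_2$ by Lemma~\ref{strip}-type operations along $P_1,P_2$, glue via $\bullet$, then apply $\rmW/\rmZ$ along $P_3$). Your write-up matches this construction step by step and is, if anything, slightly more explicit than the paper about the leaf-count argument, about why $\Gamma_{\widehat{\rmT}}$ is the subtree obtained by deleting $P_3\setminus\gamma_\Delta$, and about invoking injectivity of $\Gamma$ to conclude the result equals $\rmT$.
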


Now, we state the main result concerning the recursive 
  construction of an arbitrary triangulation of a convex 
  $n$-gon that we will need.
\begin{lemma}\label{mainlemma}
Let $\rmT$ be a triangulation of a convex $n$-gon
  such that $|\inner{\rmT}| = m \geq 2$. 
Then, there are $\widehat{n} \geq 5$, $\widetilde{n}\geq 3$ and $l\geq 1$ such 
  that $\widetilde{n}+\widehat{n}+l-1=n$, and 
  triangulations $\widetilde{\rmT}\in\Delta(\rmC_{\widetilde{n}})$ and 
  $\widehat{\rmT}\in\Delta(\rmC_{\widehat{n}})$ satisfying:
\begin{enumerate}
\item\label{it:lemone}
$|\inner{\widetilde{\rmT}}|=0$, 

\item\label{it:lemtwo} $(\widehat{\rmT}, \bottom{\widehat{\rmT}})$ is either:
\begin{enumerate}
\item\label{it:lemtwo.one}
The output of operation $\rmW$ or 
  $\rmZ$ and $|\inner{\widehat{\rmT}}|=m-1$, or
\item\label{it:lemtwo.two}
The output of operation 
  $\bullet$ and $|\inner{\widehat{\rmT}}|=m-2$.
\end{enumerate}

\item There are basic 
  operations $\rmR_1,\ldots,\rmR_{l}\in\{\rmW,\rmZ\}$
  for which $\rmT = \rmR_{l}\circ\cdots\circ\rmR_{2}\circ\rmR_{1}
  (\widetilde{\rmT}\bullet\widehat{\rmT})$.
\end{enumerate}
\end{lemma}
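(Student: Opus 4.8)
The plan is to argue through the tree $\Gamma_{\rmT}$, singling out the interior triangle of $\rmT$ that is to play the role of the triangle created by the final $\bullet$-operation, and taking $\widetilde{\rmT}$ and $\widehat{\rmT}$ to be two of the three ``branches'' of $\rmT$ hanging off it. Let $\Gamma'$ be the subgraph of $\Gamma_{\rmT}$ induced on its internal vertices; since $\Gamma_{\rmT}$ is a tree with at least two internal vertices, $\Gamma'$ is a tree on $n-2$ vertices. As every internal vertex of $\Gamma_{\rmT}$ has degree three, the degree in $\Gamma'$ of such a vertex is three minus the number of incident leaves of $\Gamma_{\rmT}$; hence the degree-three vertices of $\Gamma'$ are exactly the $\gamma$-images of the interior triangles of $\rmT$, the degree-two vertices correspond to triangles with a single boundary edge, and the leaves of $\Gamma'$ to triangles with two boundary edges. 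Since $|\inner{\rmT}|=m\geq 2$, the minimal subtree $\Gamma''$ of $\Gamma'$ spanning the $m$ degree-three vertices has at least two leaves, each a degree-three vertex of $\Gamma'$. Fix one such leaf and let $\Delta^{*}$ be the corresponding interior triangle of $\rmT$. Deleting $\gamma_{\Delta^{*}}$ from $\Gamma'$ leaves three subtrees, of which exactly one meets $\Gamma''\setminus\{\gamma_{\Delta^{*}}\}$ and hence exactly one contains a degree-three vertex of $\Gamma'$. Let $e_{2}$ be the edge of $\Delta^{*}$ facing that subtree, let $e_{1},e_{3}$ be the other two edges of $\Delta^{*}$, and for $i\in\{1,2,3\}$ let $\rmT_{i}$ be the sub-triangulation of $\rmT$ on the side of $e_{i}$ away from $\Delta^{*}$, viewed as a triangulation of a convex polygon with bottom edge $e_{i}$.

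Next I would read off the required data. The subtrees facing $e_{1}$ and $e_{3}$ contain no degree-three vertex, so every triangle of $\rmT_{1}$ and of $\rmT_{3}$ has a boundary edge of $\rmT$; hence $|\inner{\rmT_{1}}|=|\inner{\rmT_{3}}|=0$, and all $m-1$ interior triangles of $\rmT$ other than $\Delta^{*}$ lie in $\rmT_{2}$. Moreover the triangle of $\rmT_{3}$ incident to $e_{3}$ sits at an end of the path underlying the strip $\rmT_{3}$ with $e_{3}$ among its boundary edges, so, arguing as in the proof of Lemma~\ref{strip}, there are operations $\rmR_{1},\dots,\rmR_{l}\in\{\rmW,\rmZ\}$, with $l\geq 1$ because $\rmT_{3}$ has at least one triangle, that build $\rmT_{3}$ from $e_{3}$ as top edge. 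Now examine the triangle $\Delta_{0}$ of $\rmT_{2}$ incident to $\bottom{\rmT_{2}}=e_{2}$. Since $\rmT_{2}$ contains an interior triangle of $\rmT$, it is not a single triangle, so the two edges of $\Delta_{0}$ other than $e_{2}$ are not both on the boundary of $\rmT$. If exactly one of them is, then $\Delta_{0}$ has exactly two boundary edges in $\rmT_{2}$, so $(\rmT_{2},e_{2})$ is the output of $\rmW$ or $\rmZ$, and since $\Delta_{0}$ is not interior in $\rmT$ we obtain $|\inner{\rmT_{2}}|=m-1$; if neither is, then both are interior edges of $\rmT_{2}$ and cutting $\rmT_{2}$ along them exhibits $(\rmT_{2},e_{2})$ as the output of $\bullet$, while $\Delta_{0}$ is then interior in $\rmT$ but not in $\rmT_{2}$, so $|\inner{\rmT_{2}}|=m-2$. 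A short size estimate using that $\rmT_{2}$ contains an interior triangle of $\rmT$ (so either that triangle is interior to $\rmT_{2}$, whence $\rmT_{2}$ has at least six vertices, or it is $\Delta_{0}$ with both non-bottom edges interior to $\rmT_{2}$, whence $\rmT_{2}$ has at least three triangles) gives $\widehat{n}\geq 5$; and $\widetilde{n}\geq 3$ and $l\geq 1$ are clear.

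Finally I would assemble the decomposition, with $\widetilde{\rmT}:=\rmT_{1}$, $\widehat{\rmT}:=\rmT_{2}$, and $\widetilde{n},\widehat{n},l$ the associated parameters. Applying $\bullet$ to $\widetilde{\rmT}$ and $\widehat{\rmT}$ along $e_{1}=\bottom{\widetilde{\rmT}}$ and $e_{2}=\bottom{\widehat{\rmT}}$, identified at the vertex common to $e_{1}$ and $e_{2}$ in $\Delta^{*}$, recreates $\Delta^{*}$ with $\rmT_{1}$ and $\rmT_{2}$ glued back onto $e_{1}$ and $e_{2}$; thus $\widetilde{\rmT}\bullet\widehat{\rmT}$ is $\rmT$ with the branch $\rmT_{3}$ removed, and its bottom edge is the third edge $e_{3}$ of $\Delta^{*}$. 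Re-attaching $\rmT_{3}$ along $e_{3}$ by the operations $\rmR_{1},\dots,\rmR_{l}$ yields $\rmT=\rmR_{l}\circ\cdots\circ\rmR_{1}(\widetilde{\rmT}\bullet\widehat{\rmT})$, and following vertex counts through $\bullet$ (which merges one vertex pair) and the $l$ subsequent operations (each adding one vertex) gives $\widetilde{n}+\widehat{n}+l-1=n$. The part I expect to require the most care is this interface bookkeeping: one must check that the plane (cyclic-order) data of the two branches is compatible with an admissible instance of $\bullet$ that reproduces the embedded triangulation $\rmT$, and that a boundary edge at the end of a strip is always legitimate as a top edge in the sense of Lemma~\ref{strip}. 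Both are consequences of the explicit descriptions of $\rmW$, $\rmZ$ and $\bullet$, but the verification is fiddly rather than deep.
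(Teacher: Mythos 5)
Your argument is correct and follows essentially the same route as the paper: you select an interior triangle $\Delta^*$ at which to cut $\rmT$ (a leaf of the Steiner tree spanning the interior triangles, which is exactly the paper's choice of $\gamma_{\Delta}$), take the two strip branches as $\widetilde{\rmT}$ and as the portion to re-attach via $\rmW,\rmZ$ operations, and analyze the third branch $\widehat{\rmT}=\rmT_2$ by the same case split (whether the triangle $\Delta_0$ at $\bottom{\rmT_2}$ has a boundary edge of $\rmT$, which corresponds to the paper's split on whether $\gamma_{\widehat{\Delta}}$ is adjacent to a leaf in $\Gamma_{\widehat{\rmT}}\setminus\gamma_{\Delta}$). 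The size estimates and vertex-count bookkeeping you supply are also correct and slightly more explicit than the paper's.
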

\begin{proof}
Observe that there must be an internal vertex of $\Gamma_{\rmT}$,
  say $\gamma_{\Delta}$, such that if
  $\Gamma_{\widehat{\rmT}}$, $\Gamma_{\widetilde{\rmT}}$ and 
  $\Gamma_{\rmT_{l+2}}$ are the three sub-trees of $\Gamma_{\rmT}$
  rooted in $\gamma_{\Delta}$, then
  all internal vertices of $\Gamma_{\widetilde{\rmT}}\setminus\gamma_{\Delta}$
  and $\Gamma_{\rmT_{l + 2}}\setminus\gamma_{\Delta}$
  are adjacent to at least one leaf.
In particular, $|\inner{\widetilde{\rmT}}|=|\inner{\rmT_{l+2}}|=0$, 
  and condition~\ref{it:lemone}
  of the statement of the lemma is satisfied.

Let $\gamma_{\widehat{{\Delta}}}$ be the neighbor of $\gamma_{\Delta}$ 
  in $\Gamma_{\widehat{\rmT}}$.
Note that one of the following two situations must occur:
\renewcommand{\labelenumi}{}
\begin{enumerate}
\item 
\textbf{Case \arabic{enumi}:} 
In $\Gamma_{\widehat{\rmT}}\setminus \gamma_{\Delta}$,
  the vertex $\gamma_{\widehat{\Delta}}$ is adjacent to a leaf 
  (see Figure~\ref{fig:claim}.(a)).
In particular,
  $\Gamma_{\widehat{\rmT}}$ has exactly $m-1$ internal vertices 
  which are not adjacent to any leaf, or

\item 
\textbf{Case \arabic{enumi}:} 
None of the neighbors of $\gamma_{\widehat{\Delta}}$ in
  $\Gamma_{\widehat{\rmT}}\setminus\gamma_{\Delta}$ are adjacent to leaves 
  (see Figure~\ref{fig:claim}.(b)).
In particular, 
  $\Gamma_{\widehat{\rmT}}$ has exactly $m-2$ internal vertices 
  which are not adjacent to any leaf.
\end{enumerate}
\renewcommand{\labelenumi}{enumi}%

\begin{figure}[h]
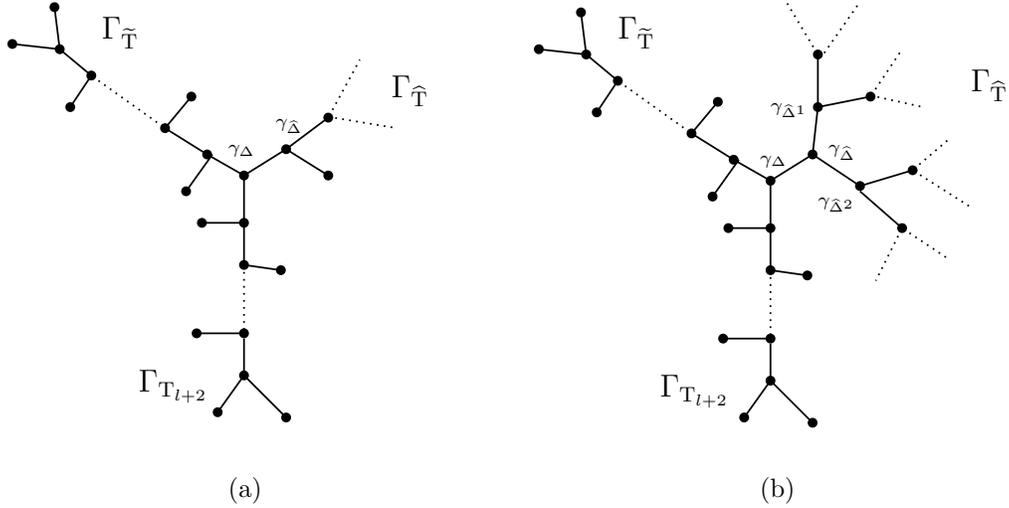
 
\centering
\ifpdf\input{figure6.pdf_t}\else\input{figure6.pstex_t}\fi\caption{Structure of $\Gamma_{\rmT}$ depending on 
  the one of subtree $\Gamma_{\widehat{\rmT}}$.}\label{fig:claim}
\end{figure}

Assume that the first case holds.
Recall that $|\inner{\widehat{\rmT}}|=m-1$.
Let $\widehat{\rmT}_0$ be the triangulation such that 
  $\Gamma_{\widehat{\rmT}_0}$ is the ternary tree 
  obtained from $\Gamma_{\widehat{\rmT}}\setminus\gamma_{\Delta}$ 
  by deleting the neighbor of $\gamma_{\widehat{\Delta}}$ which is a leaf. 
Let $\bottom{\widehat{\rmT}_0}$ be the edge of $\widehat{\rmT}_0$ 
  corresponding to the unique edge incident to 
  $\gamma_{\widehat{\Delta}}$ in $\Gamma_{\widehat{\rmT}_0}$.
Note that applying one basic operation 
  of type $\rmW$ or $\rmZ$ we can obtain 
  $(\widehat{\rmT},\bottom{\widehat{\rmT}})$ from 
  $(\widehat{\rmT}_0,\bottom{\widehat{\rmT}_0})$.
Therefore, $(\widehat{\rmT},\bottom{\widehat{\rmT}})$ 
  satisfies condition~\ref{it:lemtwo.one} of the statement of the 
  lemma.

Suppose now that the second case holds.
Recall that $|\inner{\widehat{\rmT}}| = m-2$.
Let $\gamma_{\widehat{\Delta}^1}$ and 
  $\gamma_{\widehat{\Delta}^2}$ be the vertices in 
  $\Gamma_{\widehat{\rmT}}\setminus\gamma_{\Delta}$
  that are neighbors of $\gamma_{\widehat{\Delta}}$.
Let $\Gamma_{\widehat{\rmT},1}$ 
  and $\Gamma_{\widehat{\rmT},2}$  be the trees obtained
  from $\Gamma_{\widehat{\rmT}\setminus\gamma_{\Delta}}$
  by removing the trees rooted at $\gamma_{\widehat{\Delta}^2}$
  and $\gamma_{\widehat{\Delta}^1}$, respectively.
Consider $i\in\{1,2\}$ and note that 
  $\Gamma_{\widehat{\rmT},i}$ is a ternary tree since by 
  hypothesis neither $\gamma_{\widehat{\Delta}^1}$ nor
  $\gamma_{\widehat{\Delta}^2}$ 
  are adjacent to leaves of $\Gamma_{\widehat{\rmT}}\setminus\gamma_{\Delta}$.
Let $\widehat{\rmT}_i$ be the triangulation that is 
  in bijective correspondence with $\Gamma_{\widehat{\rmT},i}$.
Define $\bottom{\widehat{\rmT}_i}$ to be the edge of 
  triangulation $\widehat{\rmT}_i$ which is in bijection with 
  the edge $(\gamma_{\widehat{\Delta}},\gamma_{\widehat{\Delta}^i})$
  of $\Gamma_{\widehat{\rmT},i}$. 
Note that $(\widehat{\rmT},\bottom{\widehat{\rmT}})$ may be obtained as
  $\widehat{\rmT}_1\bullet\widehat{\rmT}_2$.
Therefore, $(\widehat{\rmT},\bottom{\widehat{\rmT}})$ satisfies
   condition~\ref{it:lemtwo.two} of the statement of the lemma.

To finish the construction of $\rmT$ 
  it suffices to apply an appropriate sequence of $l$ operations from
  the set $\{\rmW,\rmZ\}$ starting from 
  $(\widetilde{\rmT}\bullet\widehat{\rmT},\bottom{\widetilde{\rmT}\bullet\widehat{\rmT}})$.
The result follows.
\end{proof}

\section{Satisfying States}\label{sec:degen}
In this section we first present a
  technique, the so called Transfer Matrix Method.
The technique is usually applied in situations where there
  is an underlying regular lattice, and gives formulas
  for its degeneracy.  
We adapt the technique to the context where instead of 
  a lattice there is a triangulation of 
  of a convex $n$-gon $\rmT$ and use it to determine $\degen{\rmT}$. 
Then, we apply the method to derive an exact formula for the
  number of satisfying states of strips of triangles.
Finally, we extend our arguments in order to establish 
  an exponential lower bound for $\degen{\rmT}$ 
  of any $\rmT$ triangulation of a convex $n$-gon.

\subsection{Transfer matrices and satisfying matrix}
Henceforth, the index of 
  rows and columns of all $4 \times 4$ matrices we consider 
  will be assumed to belong to $\{\texttt{+},\texttt{-}\}^{2}$.  
Let $\rmT$ be a triangulation of a convex $n$-gon such that
  $|\inner{\rmT}|=0$.
{From} now on, let $\one$ denote the $4\times 1$ vector all 
  of whose coordinates are $1$, i.e.~$\one = (1,1, 1, 1)^{t}$. 
Our immediate goal is to obtain a matrix 
  $\calM = \calM(\rmT)$ of type $4 \times 4$ that 
  satisfies the following two conditions:
\renewcommand{\labelenumi}{}
\begin{enumerate}
\item
  \textbf{Condition \arabic{enumi}:} 
Columns and rows of $\calM$ are indexed 
  by spin-assignments of the top and bottom node pairs of $\rmT$,
  respectively.
\label{it:cond1}

\item
  \textbf{Condition \arabic{enumi}:} 
For $\phi, \psi  \in \{\texttt{+},\texttt{-}\}^2$, 
  the value $\calM[\phi,\psi]$ is equal to the number of 
  satisfying states of $\rmT$ if the spin-assignments of
  the top and bottom node pairs of $\rmT$ are 
  $\psi$ and $\phi$, respectively.
\label{it:cond2}
\end{enumerate}
\renewcommand{\labelenumi}{enumi}%
Matrix $\calM$ is called the \emph{satisfying matrix} of $\rmT$.
It immediately follows that
\begin{equation*}
\degen{\rmT} = \one^{t}\cdot\calM\cdot\one\,.
\end{equation*}

By Lemma~\ref{strip}, each triangulation $\rmT \in \Delta(\rmC_{n})$ 
  such that $|\inner{\rmT}|=0$ may be constructed by applying a 
  sequence of $n-2$ operations of type $\rmW$ or $\rmZ$
  starting from $\rmT$'s top edge.
To each operation $\rmR\in\{\rmW,\rmZ\}$ we associate
  a so called \emph{transfer matrix} of type $4 \times 4$, 
  say $\calR\in\{\calW,\calZ\}$ such that:
\begin{itemize}
\item Columns of $\calR$ are indexed by 
  spin-assignments of the bottom node pair
  of $\rmT$.

\item Rows are indexed by spin-assignments of the bottom node pair
  of $\rmR(\rmT)$.

\item For $\phi, \psi  \in \{\texttt{+},\texttt{-}\}^2$, 
  matrix $\calR$  satisfies 
\begin{equation*}
\calR[\phi,\psi] = \left\{\begin{array}{cl}
  1\,, & \parbox[t]{4in}{if by setting the spin-assignments of 
  the bottom node pairs of $\rmT$ and $\rmR(\rmT)$ to $\psi$ 
  and $\phi$ respectively,  the state of the triangle created by 
  the application of $\rmR$ is satisfying,} \\
  0\,, & \text{otherwise.} 
\end{array}\right.
\end{equation*}
\end{itemize}

\begin{proposition}\label{transfer} 
Let $n\geq 3$ and $\rmT_0$ be a degenerate triangulation. 
Let $\rmT\in\Delta(\rmC_{n})$ 
  be such that 
  $\rmT = \rmR_{n-2}\circ \cdots \rmR_2\circ\rmR_1(\rmT_{0})$.
If $\calR_i$ denotes the transfer matrix associated to 
  $\rmR_i\in\{\rmW,\rmZ\}$, then
  $\calM(\rmT) = \calR_{n-2} \cdots \calR_2 \cdot \calR_1$.
\end{proposition}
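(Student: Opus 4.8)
The plan is to prove this by induction on $n$, building the satisfying matrix $\calM(\rmT)$ one triangle at a time and showing that each step of the construction in Lemma~\ref{strip} corresponds exactly to left-multiplication by the appropriate transfer matrix. The base case is $n=3$: here $\rmT$ consists of a single triangle $\Delta$ obtained by applying one operation $\rmR_1 \in \{\rmW, \rmZ\}$ to the degenerate triangulation $\rmT_0$. In this case $\calM(\rmT) = \calR_1$ should hold essentially by definition, since the only satisfying states of a single triangle with prescribed spins on the top pair (the two nodes of $\rmT_0$) and bottom pair (the nodes of $\bottom{\rmT}$) are exactly those for which the unique created triangle is satisfied; I would check this directly against the definitions of $\calM$ (Conditions~1 and~2) and of the transfer matrix $\calR$.

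For the inductive step, suppose $\rmT = \rmR_{n-2} \circ \cdots \circ \rmR_1(\rmT_0)$ and write $\rmT' = \rmR_{n-3} \circ \cdots \circ \rmR_1(\rmT_0)$, so $\rmT = \rmR_{n-2}(\rmT')$ and by induction $\calM(\rmT') = \calR_{n-3} \cdots \calR_1$. It suffices to show $\calM(\rmT) = \calR_{n-2} \cdot \calM(\rmT')$. The key observation is that applying $\rmR_{n-2}$ adds one new vertex and one new triangle, say $\Delta_{\mathrm{new}}$, whose boundary consists of the old bottom edge $\bottom{\rmT'}$ and two new edges, and that the new bottom edge $\bottom{\rmT}$ shares exactly one vertex with $\bottom{\rmT'}$ (the vertex $\beta_2$ in the notation of Operation $\rmW$, or $\beta_1$ in Operation $\rmZ$). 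A satisfying state of $\rmT$ restricts to a satisfying state of $\rmT'$ together with a choice of spin for the single new vertex, and the new triangle $\Delta_{\mathrm{new}}$ must also be satisfied. Fixing a spin-assignment $\psi$ on the top pair of $\rmT$ (which equals the top pair of $\rmT'$) and $\phi$ on the bottom pair of $\rmT$, I would count satisfying states by summing over the spin-assignment $\chi$ of the bottom pair of $\rmT'$: the number of satisfying states of $\rmT$ with these boundary conditions is $\sum_{\chi} \calR_{n-2}[\phi,\chi]\,\calM(\rmT')[\chi,\psi]$. This is because a state is counted precisely when (i) its restriction to $\rmT'$ is satisfying with top $\psi$ and bottom $\chi$, contributing the factor $\calM(\rmT')[\chi,\psi]$, and (ii) the new triangle is satisfied given that its three vertices carry the spins determined by $\chi$ (on the shared edge $\bottom{\rmT'}$) and by the new vertex's spin (recorded, together with the shared vertex, in $\phi$), contributing the factor $\calR_{n-2}[\phi,\chi]$. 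The crucial bookkeeping point is that the bottom pair of $\rmR(\rmT')$ consists of one old vertex and one new vertex, so that $\phi$ together with $\chi$ determines all spins on $\Delta_{\mathrm{new}}$ with no double-counting and no missing degree of freedom; this is exactly what the definition of the transfer matrix $\calR$ was set up to encode. The right-hand side is the $[\phi,\psi]$ entry of $\calR_{n-2}\cdot\calM(\rmT')$, completing the induction.

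The main obstacle I anticipate is the careful verification of the bijective/counting claim in the inductive step: one must check that pairing (satisfying state of $\rmT'$, spin of new vertex) with the resulting state of $\rmT$ is genuinely a bijection onto satisfying states of $\rmT$, that the factorization of the indicator into $\calR_{n-2}[\phi,\chi]$ and $\calM(\rmT')[\chi,\psi]$ has no overlap in the data it tracks, and that the indices line up — i.e., that the "bottom pair of $\rmT'$" used as the summation index is simultaneously the object indexing the columns of $\calR_{n-2}$ and (given $\psi$) the rows-through-$\calM(\rmT')$ data. Some care is also needed to handle the boundary of the induction cleanly: the first operation $\rmR_1$ acts on a degenerate triangulation whose "bottom edge" is its unique edge, and one should confirm that the top pair of $\rmT$ is well-defined as the pair of nodes of $\rmT_0$ and is left untouched by all subsequent operations, so that $\psi$ really is a consistent index throughout. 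Once the single-step identity $\calM(\rmR(\rmT')) = \calR\cdot\calM(\rmT')$ is established, the general statement follows by a routine induction and associativity of matrix multiplication.
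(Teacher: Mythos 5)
Your proposal is correct and follows essentially the same route as the paper: induction on $n$, a base case $n=3$ settled by definition, and the single-step identity $\calM(\rmR(\rmT')) = \calR\cdot\calM(\rmT')$ established by summing over the spin-assignment $\chi$ of the intermediate edge $\bottom{\rmT'}$. The bookkeeping concerns you flag (that the top pair is fixed, that $\phi$ and $\chi$ jointly determine the new triangle's spins, and that the indices of $\calR_{n-2}$ and $\calM(\rmT')$ line up along $\chi$) are exactly the points the paper's proof addresses.
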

\begin{proof}
We proceed by induction on $n$.
If $n=3$ we have that $\rmT = \rmR_1(\rmT_{0})$  
  and the statement follows by
  definition of $\calM(\rmT)$ and $\calR$.
Assume $n>3$.
By inductive hypothesis the satisfying matrix of the triangulation 
  $\widehat{\rmT} =\rmR_{n-3}\circ\cdots\circ\rmR_2\circ\rmR_1(\rmT_{0})
     \in \Delta(\rmC_{n-1})$  is
\begin{equation*}
\calM(\widehat{\rmT})= \calR_{n-3}\cdot\calR_{n-4} \cdots 
  \calR_2 \cdot \calR_1\,.
\end{equation*}
The matrix $\calR_{n-2} \cdot \calM(\widehat{\rmT})$ 
  satisfies Condition~\ref{it:cond1} 
  since columns of the matrix $\calM(\widehat{\rmT})$ are indexed by 
  the spin-assignment of $\topedge{\widehat{\rmT}}=\topedge{\rmT}$
  and the rows of matrix $\calR_{n-2}$ by the spin-assignment of 
  $\bottom{\rmT}$.

We still need to show that 
  $\calR_{n-2} \cdot \calM(\widehat{\rmT})$ satisfies 
  Condition~\ref{it:cond2}.
By inductive hypothesis, we have that 
  $\calM(\widehat{\rmT})[\chi,\psi]$ is the number of satisfying states 
  of $\widehat{\rmT}$ if the spin-assignments  
  for $\bottom{\widehat{\rmT}}$ and 
  $\topedge{\widehat{\rmT}}$ are  $\chi$ and $\psi$, respectively.
By definition, $\calR_{n-2}[\phi,\chi]$ may be $1$ or $0$
  depending on whether or not the application of
  $\rmR_{n-2}$ to $(\widehat{\rmT},\bottom{\widehat{\rmT}})$ creates 
  a triangle for which a satisfying state is obtained by setting the
  spin-assignments of $\bottom{\rmT}$ 
  and of $\bottom{\widehat{\rmT}}$ equal to $\phi$ and $\chi$, respectively.
Therefore, $\calR_{n-2}[\phi,\chi]=1$ if and only if
  each satisfying state in $\widehat{\rmT}$ with spin-assignment 
  $\chi$ and $\psi$ for 
  $\bottom{\widehat{\rmT}}$ and $\topedge{\widehat{\rmT}}$ respectively,
  is a satisfying state in $\rmT$ with 
  spin-assignment $\phi$ and $\psi$ for $\bottom{\rmT}$ and 
  $\topedge{\rmT}$ respectively.
By definition of $\calM(\rmT)$, it immediately follows that
\begin{equation*}
\calM(\rmT)[\phi,\psi] = \sum_{\chi\in\{\texttt{+},\texttt{-}\}^2}
  \calR_{n-2}[\phi,\chi]\cdot\calM(\widehat{\rmT})[\chi,\psi] =
  \left(\calR_{n-2} \cdot \calM(\widehat{\rmT})\right)[\phi,\psi]\,,
\end{equation*}
and that $\calM(\rmT) = \calR_{n-2}\cdot\calM(\widehat{\rmT})$, thus
  concluding the inductive proof.
\end{proof}

\subsection{Satisfying states of strips of triangles }\label{subsub}
We now apply the transfer matrix method to count the number of
  satisfying states in any triangulation of a convex $n$-gon 
  $\rmT$ satisfying the condition $|\inner{\rmT}| = 0$.  
First, we observe that the matrices $\calW$ and $\calZ$ associated 
  to operations $\rmW$ and $\rmZ$, respectively, are given by:
\begin{displaymath}
\calW = 
\left(\begin{array}{cccc}
 0 & 0 &1 &0 \\
0 & 1 & 0& 1\\
1& 0& 1&0  \\
0&1 &0 &0
\end{array}\right)\,, \qquad\qquad\qquad
\calZ =
\left(\begin{array}{cccc}
 0 & 1 &0 &0 \\
1 & 1 & 0& 0\\
0& 0& 1&1  \\
0&0 &1 &0
\end{array}\right)\,.
\end{displaymath}
Note that $\calW=\Pi\cdot\calZ\cdot\Pi$ where $\Pi$ is the following 
  permutation matrix:
\begin{equation*}
\Pi = \left(\begin{array}{cccc}
1 & 0 & 0 & 0 \\
0 & 0 & 1 & 0  \\
0 & 1 & 0 & 0  \\
0 & 0 & 0 & 1
\end{array}\right)\,.
\end{equation*}
Since $\Pi^{-1}=\Pi$, for any $k \geq 0$ we get that
\begin{equation}\label{elemental}
\calW^k \ = \ (\Pi\cdot\calZ\cdot\Pi)^{k} 
        \ = \ \Pi\cdot\calZ^{k}\cdot\Pi\,. 
\end{equation}

\begin{theorem}\label{caso2} 
Let $\rmT_0$ be a degenerate triangulation,  $w_1,\ldots,w_m,z_1,\ldots,z_m$
  be a sequence of non--negative integers adding up to $n-2$ such that 
  $w_j \geq 1$ for $j\neq 1$ and
  $z_j \geq 1$ for $j\neq m$.
If $\rmT=\rmZ^{z_m}\circ \rmW^{w_m}\circ\ldots\circ
  \rmZ^{z_1}\circ\rmW^{w_1}(\rmT_0)$ and  $\calM= \calM(\rmT)$, then 
\begin{equation*}
\calM = \calZ^{z_m}\cdot\Pi\cdot\calZ^{w_m}\cdot\Pi\cdots
  \Pi\cdot\calZ^{z_1}\cdot\Pi\cdot\calZ^{w_1}\cdot\Pi\,.
\end{equation*}
Moreover, if $F_{k}$ denotes the $k$-th Fibonacci number, then
\begin{equation*}
\calM\cdot\one = \left(\begin{array}{c}
F_{n-1}  \\
F_{n}   \\
F_{n}  \\
F_{n-1} 
\end{array}\right)\,.
\end{equation*}
\end{theorem}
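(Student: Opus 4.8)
The plan is to prove the two displayed identities in sequence, the first being essentially bookkeeping and the second the real combinatorial heart of the statement. For the matrix identity $\calM = \calZ^{z_m}\cdot\Pi\cdot\calZ^{w_m}\cdot\Pi\cdots\Pi\cdot\calZ^{z_1}\cdot\Pi\cdot\calZ^{w_1}\cdot\Pi$, I would start from Proposition~\ref{transfer}, which gives $\calM(\rmT) = \calR_{n-2}\cdots\calR_2\cdot\calR_1$ where each $\calR_i$ is $\calW$ or $\calZ$ according to the $i$-th operation in the construction of $\rmT$ from Lemma~\ref{strip}. Grouping the operations into the blocks $\rmW^{w_1}, \rmZ^{z_1}, \ldots, \rmW^{w_m}, \rmZ^{z_m}$ turns this product into $\calZ^{z_m}\cdot\calW^{w_m}\cdots\calZ^{z_1}\cdot\calW^{w_1}$, and then the key simplification~\eqref{elemental}, namely $\calW^{k} = \Pi\cdot\calZ^{k}\cdot\Pi$, is substituted for each occurrence of a power of $\calW$. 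One must be slightly careful with the $w_1$ block when $w_1=0$: in that degenerate case $\calW^{0}=I=\Pi\cdot\calZ^{0}\cdot\Pi$, so the formula still holds verbatim, and similarly for $z_m=0$. Adjacent $\Pi$ factors produced by consecutive substitutions do \emph{not} cancel, because a $\calZ$ block sits between them, so the alternating pattern $\calZ^{z_m}\Pi\calZ^{w_m}\Pi\cdots\Pi\calZ^{z_1}\Pi\calZ^{w_1}\Pi$ is exactly what survives. This first part is routine.

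For the second identity, $\calM\cdot\one = (F_{n-1},F_n,F_n,F_{n-1})^t$, I would argue by induction on $n$, or equivalently on the total number $n-2$ of $\rmW/\rmZ$ operations applied. The base case is $n=3$ (one operation, say $\calW$ or $\calZ$), where one checks directly from the explicit $4\times 4$ matrices that $\calW\cdot\one = \calZ\cdot\one = (1,2,2,1)^t = (F_2,F_3,F_3,F_2)^t$. For the inductive step, the natural object to track is the vector $v_n := \calM(\rmT_n)\cdot\one$ as one more operation is appended. Because $\calM(\rmT_{n+1}) = \calR\cdot\calM(\rmT_n)$ with $\calR\in\{\calW,\calZ\}$, one has $v_{n+1} = \calR\cdot v_n$, so it suffices to understand the action of $\calW$ and $\calZ$ on vectors of the special shape $(a,b,b,a)^t$. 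A direct computation from the explicit matrices shows that $\calZ\cdot(a,b,b,a)^t = (b, a+b, a+b, b)^t$ and, using $\calW = \Pi\calZ\Pi$ together with the fact that $\Pi$ merely swaps the two middle coordinates and hence fixes every vector of the form $(a,b,b,a)^t$, that $\calW\cdot(a,b,b,a)^t$ is likewise $(b, a+b, a+b, b)^t$. Thus both operations send $(a,b,b,a)^t$ to $(b,a+b,a+b,b)^t$, which is precisely the Fibonacci recursion: if $v_n = (F_{n-1},F_n,F_n,F_{n-1})^t$ then $v_{n+1} = (F_n, F_{n-1}+F_n, F_{n-1}+F_n, F_n)^t = (F_n, F_{n+1}, F_{n+1}, F_n)^t$, completing the induction.

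The one point deserving care — and the closest thing to an obstacle — is the interplay between the shape $(a,b,b,a)^t$ and the indexing convention: rows and columns are indexed by $\{\texttt{+},\texttt{-}\}^2$ in a fixed order, and one must confirm that the two ``middle'' coordinates in this ordering are exactly the pair swapped by $\Pi$ and that the symmetry $v=(a,b,b,a)^t$ is genuinely preserved, rather than some other symmetry of the index set. Once the ordering of $\{\texttt{+},\texttt{-}\}^2$ is pinned down (so that $\Pi$ as written transposes the second and third coordinates), this is immediate from the displayed forms of $\calW$, $\calZ$, $\Pi$, and the whole argument reduces to the two one-line matrix–vector computations above. A small remark: the hypotheses $w_j\ge 1$ for $j\neq 1$ and $z_j\ge 1$ for $j\neq m$ are not actually needed for $\calM\cdot\one$, since each appended $\calW$ or $\calZ$ reproduces the same recursion regardless of how the operations are grouped; they only serve to make the block decomposition in the first identity canonical. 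Finally, the claimed value $\degen{\rmT}=2F_{n+1}$ of Theorem~\ref{result1} drops out as $\one^t\cdot(\calM\cdot\one) = 2F_{n-1}+2F_n = 2F_{n+1}$, though that corollary is not part of the present statement.
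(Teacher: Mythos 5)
Your proof of the first identity is exactly the paper's: Proposition~\ref{transfer} gives the product $\calZ^{z_m}\cdot\calW^{w_m}\cdots\calZ^{z_1}\cdot\calW^{w_1}$, and substituting $\calW^{k}=\Pi\cdot\calZ^{k}\cdot\Pi$ from~(\ref{elemental}) finishes it. Your proof of the second identity is correct but takes a mildly different route. The paper proceeds \emph{globally}: it first computes $\calZ^{k}\cdot\one=(F_{k+1},F_{k+2},F_{k+2},F_{k+1})^{t}$ from the explicit formula $\left(\begin{smallmatrix}0&1\\1&1\end{smallmatrix}\right)^{k}=\left(\begin{smallmatrix}F_{k-1}&F_{k}\\F_{k}&F_{k+1}\end{smallmatrix}\right)$ via the block structure of $\calZ$, then observes $\Pi\cdot\one=\one$ and $\Pi\cdot\calZ^{k}\cdot\one=\calZ^{k}\cdot\one$ to strip every $\Pi$ out of the first identity, reducing $\calM\cdot\one$ to $\calZ^{n-2}\cdot\one$. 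You instead argue \emph{locally} by induction on $n$, observing that each of $\calW$ and $\calZ$ sends a symmetric vector $(a,b,b,a)^{t}$ to $(b,a+b,a+b,b)^{t}$, so each appended operation performs one Fibonacci step. The underlying observations are the same (the block/Fibonacci structure of $\calZ$ and $\Pi$-invariance of symmetric vectors), but your version has the minor advantage of not routing through the first identity or Lemma~\ref{strip}'s block decomposition at all, which you note explicitly: the hypotheses $w_{j}\ge 1$ for $j\ne 1$ and $z_{j}\ge 1$ for $j\ne m$ are irrelevant to $\calM\cdot\one$. The paper's version has the advantage that the closed form for $\calZ^{k}$ gets recorded as equation~(\ref{zn}) and is reused later in the proof of Theorem~\ref{result2}. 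Both are correct; the choice is a matter of economy versus reusability.
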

\begin{proof}
{From} Proposition~\ref{transfer} we have 
\begin{equation*}
\calM =  
  \calZ^{z_m}\cdot\calW^{w_m}\cdot\calZ^{z_{m-1}}\cdot\calW^{w_{m-1}} 
  \cdots\calZ^{z_{2}}\cdot\calW^{w_{2}}\cdot\calZ^{z_1}\cdot\calW^{w_1}\,.
\end{equation*}
By~(\ref{elemental}), the first stated identity immediately follows.

Now, for the second part, let $k\geq 1$.
Observe that 
\begin{eqnarray*}
\left(\begin{array}{cccc}
 0 & 1 \\ 1 & 1 
\end{array}\right)^k =
\left(\begin{array}{cccc}
 F_{k-1} & F_{k} \\
 F_{k} & F_{k+1} 
\end{array}\right)\,.
\end{eqnarray*}
It follows that,
\begin{equation}\label{zn} 
\calZ^k\cdot\one \ = \
\left(\begin{array}{cccc}
 F_{k-1} & F_{k}   & 0       & 0 \\
 F_{k}   & F_{k+1} & 0       & 0\\
 0       & 0       & F_{k+1} & F_{k}  \\
 0       & 0       & F_{k}   & F_{k-1}
\end{array}\right)\cdot\one 
\ = \
\left(\begin{array}{c}
F_{k+1}  \\
F_{k+2}  \\
F_{k+2}  \\
F_{k+1} 
\end{array}\right)\,.
\end{equation}
The first stated identity, the fact that
  $\Pi\cdot\calZ^{k}\cdot\one = \calZ^{k}\cdot\one$, 
  and observing that $\Pi\cdot\one=\one$, we get that
\begin{eqnarray*}
\calM\cdot \one & = & 
  \calZ^{z_m}\cdot\Pi\cdot\calZ^{w_m}\cdot\Pi\cdots\calZ^{z_1}\cdot\Pi\cdot\calZ^{w_1}\cdot\Pi\cdot\one \\
& = &
  \calZ^{z_m}\cdot\calZ^{w_m}\cdots\calZ^{z_1}\cdot\calZ^{w_1}\cdot\one\,.
\end{eqnarray*}
Since $\sum_{i=1}^{m}(z_i + w_i)=n-2$, the desired conclusion
  follows from~(\ref{zn}).
\end{proof}

\begin{prooff}{Theorem~\ref{result1}}
By hypothesis and Lemma~\ref{strip} we have that 
  for some degenerate triangulation $\rmT_0$ there are 
  non--negative integers $w_1,\ldots,w_m,z_1,\ldots,z_m$ adding up to
  $n-2$ such that $w_j \geq 1$ if $j\neq 1$,
  $z_j \geq 1$ if $j\neq m$, and
\begin{eqnarray*}
\rmT & = & \rmZ^{z_m}\circ\rmW^{w_m}\circ\ldots\circ
  \rmZ^{z_2}\circ\rmW^{w_2}\circ\rmZ^{z_1}\circ\rmW^{w_1}(\rmT_0)\,.
\end{eqnarray*} 
By Theorem~\ref{caso2}, we get that 
  $\degen{\rmT}=\one^{t}\cdot\calM(\rmT)\cdot\one = 2(F_{n}+F_{n-1})=2F_{n+1}$.
\end{prooff}

\medskip
We now obtain some intermediate results that we will need to prove
  Theorem~\ref{result2}:
Let $\rmT \in \Delta(\rmC_{n})$ and $\{\beta_1, \beta_2\}$ be an edge
  belonging to the boundary of $\rmT$. 
The \emph{satisfying vector} of $\rmT$ associated to node pair 
  $(\beta_1, \beta_2)$ denoted by $\vv_{\,\rmT}((\beta_1,\beta_2))$ 
  is a vector indexed by the spin-assignments 
  $\{\texttt{+},\texttt{-}\}^{2}$ of $(\beta_1, \beta_2)$, 
  so that $\vv_{\,\rmT}((\beta_1,\beta_2))[\psi]$ is equal
  to the number of satisfying states of~$\rmT$ if the spin-assignment of
  $(\beta_1, \beta_2)$ is equal to $\psi$.
For instance, by Theorem~\ref{caso2}, for every triangulation~$\rmT$
  of a convex $n$-gon with no interior triangles,
\begin{equation*}
\vv_{\,\rmT}(\bottom{\rmT}) = \left(\begin{array}{c}
F_{n-1}  \\
F_{n}   \\
F_{n}  \\
F_{n-1} 
\end{array}\right)\,.
\end{equation*}
Clearly, for every $\rmT \in \Delta(\rmC_{n})$ we have that 
\begin{equation}\label{igual} 
\vv_{\,\rmT}[\texttt{++}] = \vv_{\,\rmT} [\texttt{--}]\,, 
\qquad\qquad
\vv_{\,\rmT} [\texttt{+-}] = \vv_{\,\rmT}[\texttt{-+}]\,.
\end{equation}
Note that for edges $(\beta_1, \beta_2) \neq (\widehat{\beta}_1, \widehat{\beta}_2)$ 
  belonging to the boundary of $\rmT$, if
\begin{equation*}
\vv_{\,\rmT}((\beta_1, \beta_2)) = \left(\begin{array}{c}
\mathrm{x}  \\
\mathrm{y}   \\
\mathrm{y}  \\
\mathrm{x}
\end{array}\right)\,,
\qquad\qquad
\vv_{\,\rmT}((\widehat{\beta_1}, \widehat{\beta}_2)) = \left(\begin{array}{c}
\widehat{\mathrm{x}}  \\
\widehat{\mathrm{y}}   \\
\widehat{\mathrm{y}}  \\
\widehat{\mathrm{x}} 
\end{array}\right)\,,
\end{equation*}
then $2(\mathrm{x+y}) = 2(\widehat{\mathrm{x}} + \widehat{\mathrm{y}})$, or
  equivalently $\mathrm{x+y} = \widehat{\mathrm{x}} + \widehat{\mathrm{y}}$.

\begin{proposition}\label{parallel} If $\rmR \in \{\rmW, \rmZ\}$, 
  $\widehat{\rmT}\in\Delta(\rmC_{\widehat{n}})$, and 
  $\rmT = \rmR(\widehat{\rmT})$, then
\begin{equation*}
\vv_{\,\rmT}(\bottom{\rmT}) = 
  \calR \cdot \vv_{\,\widehat{\rmT}}(\bottom{\widehat{\rmT}})\,.
\end{equation*}
\end{proposition}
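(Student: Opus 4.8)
The plan is to mimic the inductive argument already used for Proposition~\ref{transfer}, but track the single boundary edge $\bottom{\rmT}$ rather than the full $4\times 4$ satisfying matrix. First I would recall the defining property of the transfer matrix $\calR$: for spin-assignments $\phi,\psi\in\{\texttt{+},\texttt{-}\}^2$, $\calR[\phi,\psi]=1$ exactly when assigning $\psi$ to $\bottom{\widehat{\rmT}}$ and $\phi$ to $\bottom{\rmT}=\bottom{\rmR(\widehat{\rmT})}$ makes the newly created triangle satisfying, and $0$ otherwise. The key structural observation is that the operation $\rmR$ adds one new vertex and two new edges forming a single new triangle, one of whose edges is $\bottom{\widehat{\rmT}}$; a state of $\rmT$ is satisfying if and only if its restriction to $\widehat{\rmT}$ is satisfying \emph{and} the new triangle is satisfying. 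This is because $\rmR$ does not alter any face of $\widehat{\rmT}$ and the new triangle is the only face of $\rmT$ not already a face of $\widehat{\rmT}$ — a fact implicit in the definitions of operations $\rmW$ and $\rmZ$ in Section~\ref{sub2}.

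Second, I would set up the bookkeeping. Fix $\phi\in\{\texttt{+},\texttt{-}\}^2$, a spin-assignment of $\bottom{\rmT}$. A satisfying state of $\rmT$ with $\bottom{\rmT}$ assigned $\phi$ is uniquely determined by (a) a spin-assignment $\psi$ of $\bottom{\widehat{\rmT}}$ and (b) a satisfying state of $\widehat{\rmT}$ extending $\psi$, subject to the constraint that the new triangle on $\bottom{\widehat{\rmT}}$ together with the new vertex (whose spin is forced, or rather ranges over the one choice consistent with $\phi$) is satisfying. The crucial point is that $\phi$ together with $\psi$ already pins down the spin of the new vertex — in operation $\rmW$ the new vertex $\widehat\beta_1$ is the first coordinate of $\bottom{\widehat\rmT}$ paired against $\beta_2$, so $\phi$ records its spin plus that of the shared vertex $\beta_2$ — so ``the new triangle is satisfying'' is a predicate depending only on $\phi$ and $\psi$, which is precisely what $\calR[\phi,\psi]$ encodes. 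Hence the number of satisfying states of $\rmT$ with $\bottom{\rmT}\mapsto\phi$ equals $\sum_{\psi}\calR[\phi,\psi]\cdot(\text{number of satisfying states of }\widehat{\rmT}\text{ with }\bottom{\widehat{\rmT}}\mapsto\psi)$, i.e. $\sum_{\psi}\calR[\phi,\psi]\cdot\vv_{\,\widehat{\rmT}}(\bottom{\widehat{\rmT}})[\psi]$, which is the $\phi$-th coordinate of $\calR\cdot\vv_{\,\widehat{\rmT}}(\bottom{\widehat{\rmT}})$. Since $\phi$ was arbitrary, the vector identity follows.

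I expect the main obstacle to be the careful matching of indices between $\bottom{\rmT}$, $\bottom{\widehat{\rmT}}$, and the spin of the new vertex — in particular verifying that the predicate ``new triangle is satisfying'' genuinely factors through $(\phi,\psi)$ with no hidden dependence, and that it is exactly the predicate tabulated by $\calR$ (rather than, say, its transpose or a permuted version). This is really a matter of unwinding the definitions of $\rmW$ and $\rmZ$ and of $\calR$ from the previous subsection, and one should handle $\rmR=\rmW$ and $\rmR=\rmZ$ symmetrically, noting that which vertex of the bottom pair is ``new'' and which is ``inherited'' differs between the two operations but in each case $\phi$ and $\psi$ jointly determine all three spins of the new triangle. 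A one-line alternative, if one prefers to avoid re-deriving this from scratch, is to observe that $\vv_{\,\rmT}(\bottom{\rmT})=\calM(\rmT)\cdot\one$ and $\vv_{\,\widehat{\rmT}}(\bottom{\widehat{\rmT}})=\calM(\widehat{\rmT})\cdot\one$ by definition of the satisfying matrix, that $\calM(\rmT)=\calR\cdot\calM(\widehat{\rmT})$ by the single-step case of Proposition~\ref{transfer} (valid whenever $|\inner{\widehat{\rmT}}|=0$, and trivially extendable since $\rmR$ creates no interior triangle), and hence $\vv_{\,\rmT}(\bottom{\rmT})=\calR\cdot\calM(\widehat{\rmT})\cdot\one=\calR\cdot\vv_{\,\widehat{\rmT}}(\bottom{\widehat{\rmT}})$; however, since the statement is meant to apply to $\widehat{\rmT}$ with interior triangles too, the self-contained inductive argument above is the safer route.
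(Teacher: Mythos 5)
Your argument is correct and is exactly the content that the paper leaves ``implicit in the proof of Proposition~\ref{transfer}'': fix $\phi$, decompose a satisfying state of $\rmT$ as a satisfying state of $\widehat{\rmT}$ together with the constraint that the single newly created triangle is satisfying, and note that this constraint factors through $(\phi,\psi)$ exactly as encoded by $\calR$. You also correctly flag that the shortcut via $\calM(\widehat{\rmT})\cdot\one$ is only available when $|\inner{\widehat{\rmT}}|=0$, which is why the paper needs this separate proposition in the first place.
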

\begin{proof}
Implicit in the proof of Proposition~\ref{transfer}.
\end{proof}

We now define a useful operation on satisfying vectors.
Let $\bullet$ be the binary operator over $\NN^4$ defined by
\begin{eqnarray*}
\left(\begin{array}{cccc}
x_1 \\
x_2 \\
x_3\\
x_4
\end{array}\right) 
\bullet
\left(\begin{array}{cccc}
 y_1 \\
y_2 \\
y_3\\
y_4
\end{array}\right)
& = &
\left(\begin{array}{cccc}
 x_2\, y_3 \\
x_1 y_2 + x_3 y_4\\
x_4 y_3 + x_3 y_1\\
x_3 y_2 
\end{array}\right)\,.
\end{eqnarray*}

\begin{proposition}\label{hola} 
Let $\rmT_1\in \Delta(\rmC_{n_1})$ and 
  $\rmT_2 \in \Delta(\rmC_{n_2})$ be such that
  $\bottom{\rmT_1}= (\beta_{1}^1, \beta_{2}^1)$ and 
  $\bottom{\rmT_2}= (\beta_{1}^2,\beta_{2}^2)$. 
Then, 
\begin{equation*}
\vv_{\,\rmT_1 \bullet \rmT_2}((\beta_{1}^1,\beta_{2}^2)) 
   = \vv_{\,\rmT_1}((\beta_{1}^1, \beta_{2}^1)) 
        \bullet \vv_{\,\rmT_2}((\beta_{1}^2, \beta_{2}^2))\,.
\end{equation*}
\end{proposition}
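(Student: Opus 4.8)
The plan is to unwind the definition of the $\bullet$ operation on triangulations and directly count satisfying states of $\rmT_1\bullet\rmT_2$ by conditioning on the spin assignment of the identified node. Recall that $\rmT=\rmT_1\bullet\rmT_2$ is obtained by gluing $\beta_1^2$ to $\beta_2^1$ (call the merged vertex $v$), keeping the other three boundary-of-bottom vertices $\beta_1^1,\beta_2^2$ and the old $\beta_2^1=\beta_1^2=v$, and adding the new edge $\{\beta_1^1,\beta_2^2\}$; this new edge together with the two old bottom edges $\{\beta_1^1,v\}$ and $\{v,\beta_2^2\}$ bounds a newly created triangular face $\Delta$. A state of $\rmT$ restricts to a state of $\rmT_1$ and a state of $\rmT_2$ that agree on the shared vertex $v$, and conversely any such compatible pair glues to a state of $\rmT$; moreover a state of $\rmT$ is satisfying iff its restrictions to $\rmT_1$ and to $\rmT_2$ are satisfying \emph{and} the new triangle $\Delta$ is satisfied (frustrates exactly one of its three edges). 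So the count factors as a sum over the four spin patterns of $(\beta_1^1,v,\beta_2^2)\in\{\texttt{+},\texttt{-}\}^3$, weighted by the number of satisfying extensions into $\rmT_1$ (which depends only on the spins of $(\beta_1^1,v)$, i.e.\ $\vv_{\rmT_1}$) and into $\rmT_2$ (depending only on $(v,\beta_2^2)$, i.e.\ $\vv_{\rmT_2}$), with an indicator that $\Delta$ is satisfied.

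Concretely, I would fix the target spin pattern $\psi$ of $\bottom{\rmT}=(\beta_1^1,\beta_2^2)$ and write
\begin{equation*}
\vv_{\,\rmT_1\bullet\rmT_2}((\beta_1^1,\beta_2^2))[\psi]
 = \sum_{t\in\{\texttt{+},\texttt{-}\}}
   [\,\text{$\Delta$ satisfied by $(\psi_1,t,\psi_2)$}\,]\cdot
   \vv_{\,\rmT_1}((\beta_1^1,\beta_2^1))[(\psi_1,t)]\cdot
   \vv_{\,\rmT_2}((\beta_1^2,\beta_2^2))[(t,\psi_2)]\,,
\end{equation*}
where $\psi=(\psi_1,\psi_2)$ and the bracket is the Iverson indicator. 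Then I would simply enumerate: the triangle $\Delta$ on vertices $(\beta_1^1,v,\beta_2^2)$ with edges $\{\beta_1^1,v\},\{v,\beta_2^2\},\{\beta_1^1,\beta_2^2\}$ is satisfying exactly when the three spins are not all equal and not of the pattern that frustrates two edges — i.e.\ exactly two of the three spins agree. Working this out for each of the four choices of $\psi$ and the two choices of $t$ and comparing with the explicit formula for $\bullet$ on $\NN^4$ (using the indexing order $\texttt{++},\texttt{+-},\texttt{-+},\texttt{--}$ as fixed in the paper) yields the four coordinates $x_2y_3$, $x_1y_2+x_3y_4$, $x_4y_3+x_3y_1$, $x_3y_2$. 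For instance, for $\psi=\texttt{++}$ the triangle is satisfied only when $t=\texttt{-}$ (so that edges $\{\beta_1^1,v\},\{v,\beta_2^2\}$ are frustration-free and only $\{\beta_1^1,\beta_2^2\}$ is frustrated), giving the single term $\vv_{\rmT_1}[\texttt{+-}]\cdot\vv_{\rmT_2}[\texttt{-+}]=x_2y_3$; and using the symmetry relations \eqref{igual} one checks the remaining three cases match likewise.

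The only real care needed is bookkeeping: being consistent about which of $\beta_2^1,\beta_1^2$ becomes the ``middle'' vertex $v$ and in which slot it sits inside $\bottom{\rmT_1}$ versus $\bottom{\rmT_2}$, and pinning down the fixed coordinate ordering of $\{\texttt{+},\texttt{-}\}^2$ so that the case analysis lands on exactly the stated formula rather than a relabelled version of it. I expect this to be the main (and essentially only) obstacle — there is no conceptual difficulty beyond the observation that satisfying-ness is local to faces, so gluing along a vertex decouples the two sides subject to the single shared-spin constraint and the one new triangular constraint. Once the indexing conventions are nailed down, the verification is a finite check of $4\times 2$ cases, and invoking \eqref{igual} keeps even that short.
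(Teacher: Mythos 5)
Your proposal is correct and follows essentially the same route as the paper's proof: condition on the spin of the identified middle vertex, observe that satisfying states of $\rmT_1\bullet\rmT_2$ factor into independent satisfying states of $\rmT_1$ and $\rmT_2$ subject to the shared-spin constraint and the new triangle being satisfied, then enumerate the $\texttt{+}/\texttt{-}$ choices for the middle vertex and invoke \eqref{igual} to reconcile the derived coordinates with the stated definition of $\bullet$. The only cosmetic difference is that you write the case analysis as a single sum with an Iverson bracket whereas the paper works through the four index cases verbally; the reasoning is the same.
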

\begin{proof}
To simplify the notation we denote 
  $\vv_{\,\rmT_1\bullet\rmT_2}((\beta_{1}^1,\beta_{2}^2))$, 
  $\vv_{\,\rmT_1}((\beta_{1}^1, \beta_{2}^1))$
  and $\vv_{\,\rmT_2}((\beta_{1}^2,\beta_{2}^2))$
  by
  $\vv_{\beta_{1}^1\beta_{2}^2}$,
  $\vv_{\beta_{1}^1\beta_{2}^1}$, 
  and
  $\vv_{\beta_{1}^2 \beta_{2}^2}$, respectively.
For $i\in\{1,2\}$, we know that 
  $\vv_{\beta_{1}^i\beta_{2}^i}[\psi]$
  is equal to the number of
  satisfying states of $\rmT_i$ if 
  $\psi \in \{\texttt{+},\texttt{-}\}^2$ is
  the spin-assignment for $(\beta_{1}^i,\beta_{2}^i)$.
We consider the following cases depending on the spin-assignment
  of $(\beta_{1}^1, \beta_{2}^2)$.
\begin{itemize}
\item 
Spin-assignment of $(\beta_{1}^1, \beta_{2}^2)$ is $\texttt{++}$:
Since $\texttt{+++}$ is not a satisfying assignment for the
  triangle $(\beta_{1}^1,\beta,\beta_{2}^2)$ of $\rmT$,
  if the spin-assignment of $\beta = \beta_{1}^2=\beta_{2}^1$ is $\texttt{+}$, 
  then the state of $\rmT$ is not satisfying.
If the spin assignment of $(\beta_{1}^1,\beta, \beta_{2}^2)$ is
  $\texttt{+}\texttt{-}\texttt{+}$, each satisfying state of $\rmT_1$
  and $\rmT_2$ (with spin-assignment for $(\beta_{1}^1, \beta_{2}^1)$
  equal to $\texttt{+}\texttt{-}$ and spin-assignment for 
  $(\beta_{1}^2,\beta_{2}^2)$ equal to $\texttt{-}\texttt{+}$) 
  is a satisfying state for $\rmT$, and 
\begin{equation*}
\vv_{\beta_{1}^1\beta_{2}^2}[\texttt{++}]=
   \vv_{\beta_{1}^1\beta_{2}^1}[\texttt{+-}]\cdot
   \vv_{\beta_{1}^2\beta_{2}^2}[\texttt{-+}]\,.
\end{equation*}

\item 
Spin-assignment of $(\beta_{1}^1, \beta_{2}^2)$ is $\texttt{+}\texttt{-}$:
Note that 
  the triangle $(\beta_{1}^1,\beta, \beta_{2}^2)$ with spin-assignment 
  $\texttt{+}\texttt{+}\texttt{-}$ fulfills the condition
  of satisfying state. 
Hence, each satisfying state of $\rmT_1$ and $\rmT_2$ 
  (with spin-assignment for $(\beta_{1}^1, \beta_{2}^1)$ equal
  to $\texttt{++}$ and spin-assignment for $(\beta_{1}^2, \beta_{2}^2)$
  equal to $\texttt{+}\texttt{-}$) is a satisfying state for $\rmT$.
Analogously, if the spin-assignment of $\beta$ is equal to $\texttt{-}$,
  each satisfying state of $\rmT_1$ and $\rmT_2$ (with spin-assignment
  for $(\beta_{1}^1, \beta_{2}^1)$ equal to $\texttt{+}\texttt{-}$ and
  spin-assignment for $(\beta_{1}^2,\beta_{2}^2)$ equal to
  $\texttt{--}$) is a satisfying state for $\rmT$. 
It follows that
\begin{equation*}
\vv_{\beta_{1}^1\beta_{2}^2}[\texttt{+-}] = 
  \vv_{\beta_{1}^1 \beta_{2}^1}[\texttt{++}]\vv_{\beta_{1}^2\beta_{2}^2}[\texttt{+-}]  + \vv_{\beta_{1}^1\beta_{2}^1}[\texttt{+-}] \vv_{\beta_{1}^2\beta_{2}^2}[\texttt{--}]\,.
\end{equation*}
\end{itemize}
By a symmetry argument, we also have that
\begin{eqnarray*}
\vv_{\beta_{1}^1\beta_{2}^2}[\texttt{--}] & = &
  \vv_{\beta_{1}^1\beta_{2}^1}[\texttt{-+}]\vv_{\beta_{1}^2\beta_{2}^2}[\texttt{+-}]\,, \\
\vv_{\beta_{1}^1\beta_{2}^2}[\texttt{-+}] & = & 
  \vv_{\beta_{1}^1\beta_{2}^1}[\texttt{-+}]\vv_{\beta_{1}^2\beta_{2}^2}[\texttt{++}] + \vv_{\beta_{1}^1\beta_{2}^1}[\texttt{--}]\vv_{\beta_{1}^2\beta_{2}^2}[\texttt{-+}]\,.
\end{eqnarray*}
\end{proof}



We now recall some basic well known facts about Fibonacci numbers.
Let $\varphi$ denote the golden ration.
If $F_n$ denotes the $n$-th  Fibonacci number, it is well known
  that $F_{n+1}=F_{n}+F_{n-2}$ for all $n\geq 1$, and that 
\begin{equation*}
F_n \ = \ \frac{\varphi^n -(-\frac{1}{\varphi})^n}{\sqrt{5}}
\end{equation*}
It immediately follows that for all $n\geq 1$,
\begin{equation}\label{bounds}
\varphi^{n-2} 
  \ \leq\ F_n 
  \ \leq \frac{1 + \left(\frac{1}{\varphi}\right)^{2}}{\sqrt{5}} \varphi^n \ \leq\ \varphi^n \,.
\end{equation}

\begin{lemma}\label{o2}
If $\rmT$ is a triangulation of a convex $n$-gon, then
  $\varphi^{n -|\inner{\rmT}|} 
   \geq 
  \varphi^{2}(\sqrt{\varphi})^n$.
\end{lemma}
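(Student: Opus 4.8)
The inequality $\varphi^{n-|\inner{\rmT}|} \geq \varphi^{2}(\sqrt{\varphi})^{n}$ is, after dividing both sides by $\varphi^{2}$ and taking logarithms base $\varphi$, equivalent to the purely arithmetic claim
\begin{equation*}
n - |\inner{\rmT}| - 2 \ \geq\ \frac{n}{2}\,,
\qquad\text{i.e.}\qquad
|\inner{\rmT}| \ \leq\ \frac{n}{2} - 2\,.
\end{equation*}
So the whole lemma reduces to an upper bound on the number of interior triangles of a triangulation of a convex $n$-gon. The plan is therefore: first reduce to this combinatorial inequality, then prove it using the tree description from Section~\ref{sub1}.

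For the combinatorial bound I would argue via the bijection $\Gamma$ between $\Delta(\rmC_{n})$ and plane ternary trees with $n-2$ internal vertices and $n$ leaves, under which (as recorded after the statement of $\Gamma$) the interior triangles of $\rmT$ correspond exactly to the internal vertices of $\Gamma_{\rmT}$ that are \emph{not} adjacent to any leaf. Let $t$ be the number of internal vertices of $\Gamma_{\rmT}$; then $t = n-2$, and every internal vertex has degree $3$, so the number of edges of $\Gamma_{\rmT}$ incident to a leaf is exactly $n$ (one per leaf). Each internal vertex adjacent to at least one leaf absorbs at least one of these $n$ leaf-edges, so the number of internal vertices adjacent to a leaf is at most $n$; but that is too weak. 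Instead I would count more carefully: an internal vertex adjacent to a leaf is incident to at most $3$ leaf-edges, and the internal vertices adjacent to at least one leaf are $t - |\inner{\rmT}| = (n-2) - |\inner{\rmT}|$ in number, so
\begin{equation*}
n \ \leq\ 3\bigl((n-2) - |\inner{\rmT}|\bigr)\,,
\end{equation*}
which rearranges to $|\inner{\rmT}| \leq \tfrac{2n}{3} - 2$; still not quite $\tfrac{n}{2}-2$. The sharper route is to observe that in a ternary tree a vertex adjacent to a leaf is adjacent to at most two leaves \emph{except} in degenerate small cases, and more usefully that the number of leaves equals (number of internal vertices of degree-$3$ pattern)\,$+\,2$; concretely, for a tree where $\ell_i$ denotes the number of leaves hanging off internal vertex $i$, one has $\sum_i \ell_i = n$ and each $\ell_i \in \{0,1,2,3\}$, with $\ell_i = 3$ only if $t=1$. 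Pairing each internal vertex adjacent to leaves with the path structure as in Lemmas~\ref{strip}, \ref{lemma2} and \ref{mainlemma}, the extremal configuration (maximizing $|\inner{\rmT}|$) is the "balanced" one where leaves come in pairs at the ends of maximal leaf-free paths, giving exactly $n = 2(|\inner{\rmT}| + 2)$ in the tight case. So the clean statement to prove is: the number of internal vertices adjacent to a leaf is at least $|\inner{\rmT}|+2$, equivalently $2|\inner{\rmT}| + 4 \leq n$.

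To prove $2|\inner{\rmT}|+4 \leq n$ cleanly I would induct on $m = |\inner{\rmT}|$ using exactly the recursive decomposition already established: for $m=0$ Lemma~\ref{strip} gives a path-like tree with $n$ leaves and $n-2$ internal vertices, and one checks $n \geq 4$ directly (two internal endpoints each carry two leaves). For $m \geq 1$, Lemma~\ref{lemma2} (case $m=1$) and Lemma~\ref{mainlemma} (case $m \geq 2$) express $\rmT$ as being built from strictly smaller triangulations whose interior-triangle counts drop by $1$ or $2$ while $n$ drops in a controlled way; feeding the inductive hypothesis through the arithmetic relations $\widetilde{n}+\widehat{n}+l-1 = n$ with $\widetilde{n}\geq 3$, $\widehat{n}\geq 5$, $l \geq 1$ from Lemma~\ref{mainlemma} (and the analogous $n_1+n_2+n_3 = n$, $n_i \geq 2$ from Lemma~\ref{lemma2}) yields the bound. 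Alternatively — and this is probably the shortest write-up — skip induction entirely: it is classical that a rooted ternary tree with $t$ internal nodes has exactly $t+2$ leaves, and the number of internal nodes adjacent to a leaf is minimized precisely when leaves cluster in pairs, forcing that number to be at least $\lceil (t+2)/2 \rceil$; since interior triangles are the internal nodes with \emph{no} adjacent leaf, $|\inner{\rmT}| \leq t - \lceil(t+2)/2\rceil = \lfloor (t-2)/2 \rfloor \leq (n-4)/2$. The main obstacle is pinning down this extremal/counting claim about leaf-adjacency in ternary trees rigorously and in a way consistent with the paper's conventions (in particular handling the two special internal vertices that carry two leaves each, which is exactly what makes the "$+2$" in $|\inner{\rmT}| \leq n/2 - 2$ appear); everything after that is the one-line logarithm manipulation together with the bounds~(\ref{bounds}) on Fibonacci numbers, which are not even needed here since the inequality is exact in the exponents.
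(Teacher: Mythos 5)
Your reduction --- dividing by $\varphi^2$ and taking $\log_\varphi$ to get $|\inner{\rmT}| \leq n/2 - 2$ --- is exactly the paper's, and correct. The paper then finishes in one line from $|\outter{\rmT}| + |\inner{\rmT}| = n-2$ together with $|\outter{\rmT}| \geq |\inner{\rmT}| + 2$, the latter being precisely the fact you eventually isolate (``the number of internal vertices of $\Gamma_{\rmT}$ adjacent to a leaf is at least $|\inner{\rmT}| + 2$''). The gap in your proposal is that this inequality is identified but never established: the crude degree count gives only $2n/3 - 2$, as you note; the suggested induction via Lemma~\ref{mainlemma} falls one unit short in its $|\inner{\widehat{\rmT}}| = m-2$ branch (the inductive hypothesis gives $\widehat{n} \geq 2m$, and $n = \widetilde{n}+\widehat{n}+l-1 \geq \widehat{n}+3$ then yields only $n \geq 2m+3$, not the needed $2m+4$, so some extra bookkeeping is required); and the appeal to ``leaves cluster in pairs in the minimizer'' is an extremal assertion, not a proof --- which you concede at the end.

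The missing step has a one-line proof. Let $S$ be the subtree of $\Gamma_{\rmT}$ induced on its internal vertices; it is a tree on $n-2$ vertices of maximum degree $3$, whose degree-$3$ vertices correspond exactly to $\inner{\rmT}$ and whose degree-$1$ and degree-$2$ vertices correspond to $\outter{\rmT}$. In any tree $\sum_v (\deg v - 2) = -2$, so the number of degree-$1$ vertices of $S$ equals the number of degree-$3$ vertices plus $2$; hence $|\outter{\rmT}| \geq |\inner{\rmT}| + 2$, which combined with $|\outter{\rmT}|+|\inner{\rmT}|=n-2$ gives the bound. This is exactly what the paper's proof invokes. (Incidentally, this also exposes a small boundary issue with the lemma as stated: for $n=3$ the ``subtree'' $S$ is a single isolated vertex, $|\outter{\rmT}| = 1 < 2$, and indeed $\varphi^3 < \varphi^{7/2}$; one implicitly needs $n \geq 4$.)
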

\begin{proof}
Since $|\outter{\rmT}|\geq |\inner{\rmT}|+2$ and
  $|\outter{\rmT}|+|\inner{\rmT}|=n-2$, we get that
  $|\inner{\rmT}| \leq (n/2) -2$.
The claimed result immediately follows.
\end{proof}

\begin{prooff}{of Theorem~\ref{result2}}
We claim that for any triangulation of a convex $n$-gon $\rmT$ such 
  that $|\inner{\rmT}| = m$ it holds that 
  $\degen{\rmT}\geq \varphi^{n-m}$.
To prove this claim we proceed by induction on~$m$.
If $m=0$, by Theorem~\ref{result1} we have that 
  $\degen{\rmT} = 2 F_{n+1}$. 
Using the lower bound in~(\ref{bounds}) we obtain 
  $\degen{\rmT} \geq 2  \varphi^{n-1} \geq \varphi^n$.
If $m=1$, by Lemma~\ref{lemma2} we know that
  for some $n_1, n_2, n_3\geq 2$ such that $n_1+n_2+n_3=n$
  there are triangulations $\rmT_1 \in \Delta(\rmC_{n_1+1})$ and
  $\rmT_2 \in \Delta(\rmC_{n_2+1})$ such that
  $|\inner{\rmT_1}|=|\inner{\rmT_2}|=0$,  
  and basic operations 
  $\rmR_{1},\ldots,\rmR_{n_3-1}\in\{\rmW,\rmZ\}$ such that
\begin{equation*}
\rmT = \rmR_{n_3-1}\circ\cdots\circ\rmR_{2}\circ\rmR_{1}
  (\rmT_1 \bullet \rmT_2)\,.
\end{equation*}
By Theorem~\ref{caso2}, for $i\in\{1,2\}$ we know that
\begin{equation*}
\vv_{\,\rmT_i}(\bottom{\rmT_i}) = 
  \left(\begin{array}{cccc}
  F_{n_i} \\
  F_{n_i+1} \\
  F_{n_i+1}\\
  F_{n_i}
  \end{array}\right)\,.
\end{equation*}
Now, denote $\vv_{\,\rmT_1\bullet\rmT_2}(\bottom{\rmT_1\bullet\rmT_2})$ 
  by $\vv$.
Observe that Proposition~\ref{hola} and the definition of $\bullet$
  imply that
\begin{equation*}
\vv
   = \left(\begin{array}{cccc}
     F_{n_1} \\
     F_{n_1+1} \\
     F_{n_1+1}\\
     F_{n_1}
     \end{array}\right) \bullet 
     \left(\begin{array}{cccc}
     F_{n_2} \\
     F_{n_2+1} \\
     F_{n_2+1}\\
     F_{n_2}
     \end{array}\right)= 
     \left(\begin{array}{cccc}
     F_{n_1+1}F_{n_2+1} \\
     F_{n_1}F_{n_2+1} + F_{n_1+1}F_{n_2} \\
     F_{n_1}F_{n_2+1} + F_{n_1+1}F_{n_2} \\
     F_{n_1+1}F_{n_2+1}
\end{array}\right)\,. 
\end{equation*}
Repeated application of Proposition~\ref{parallel} yields that
\begin{equation*}
\degen{\rmT} = 
  \one^{t}\cdot \calR_{n_3-1}\cdots\calR_{2}\cdot\calR_{1}\cdot\vv\,.
\end{equation*}
By~(\ref{igual}) and due to the block structure of $\calZ$,
  we have that $\Pi\cdot\vv= \vv$ and 
  $\Pi\cdot\calZ^{q}\cdot\vv = \calZ^{q}\cdot\vv$, for every $q \geq 0$.  
Therefore, 
  since $\calW=\Pi\cdot\calZ\cdot\Pi$, 
  the last displayed identity may be rewritten as 
  $\degen{\rmT} = \one^{t}\cdot\calZ^{n_3-1}\cdot\vv$.
Hence,
\begin{eqnarray*}
\degen{\rmT} & = & \one^{t}\cdot
  \left(\begin{array}{cccc}
  F_{n_3 -2} & F_{n_3-1} & 0 & 0 \\
  F_{n_3-1} & F_{n_3} &0 & 0 \\
  0 & 0 & F_{n_3} & F_{n_3-1} \\
  0 & 0 & F_{n_3-1} & F_{n_3 -2}
  \end{array}\right)\cdot\vv  \\
  & = & 2\left(F_{n_3}F_{n_1+1}F_{n_2+1}
         +      F_{n_3+1}(F_{n_1}F_{n_2+1} + F_{n_1+1}F_{n_2})\right)\,.
\end{eqnarray*}
Since Fibonacci numbers satisfy the identity
  $F_{p+q}=F_{p}F_{q-1}+F_{p+1}F_{q}$, we get that
\begin{eqnarray*}
\degen{\rmT} 
  & = &  
  2\left(F_{n_3}(F_{n_1+1}F_{n_2+1}+F_{n_1}F_{n_2+1}+F_{n_1+1}F_{n_2}) 
    +F_{n_3-1}(F_{n_1}F_{n_2+1}+F_{n_1+1}F_{n_2})\right) \\
& = &
  2\left(F_{n_3}(F_{n_1+2}F_{n_2+1}+F_{n_1+1}F_{n_2}) 
    +F_{n_3-1}(F_{n_1+2}F_{n_2}+F_{n_1+1}F_{n_2-1}-F_{n_1-1}F_{n_2-1})\right) \\
& = & 
  2(F_{n_3}F_{n_1+n_2+2}+F_{n_3-1}(F_{n_1+n_2+1}-F_{n_1-1}F_{n_2-1}))\\
& = & 
  2(F_{n_1+n_2+n_3+1} - F_{n_3-1}F_{n_1-1}F_{n_2-1})\,.
\end{eqnarray*}
Since $n =n_1 + n_2 + n_3$, $2>\varphi$ and $\varphi^{2}-1 = \varphi$, 
  by~(\ref{bounds}) it follows that
\begin{equation*}
  \degen{\rmT} 
\ \geq\
  2\varphi^{n_1+n_2+n_3-1}\left(1-\varphi^{-2}\right)  
\ \geq\
  \varphi^{n-1}\,.
\end{equation*}
Now, suppose the claim holds for every triangulation 
  $\rmT\in\Delta(\rmC_{n})$ such that $|\inner{\rmT}|< m$. 
Let $\rmT\in\Delta(\rmC_{n})$ be such that $|\inner{\rmT}|=m$. 

We know from Lemma~\ref{mainlemma} that
  there is a $\widetilde{\rmT}\in\Delta(\rmC_{\widetilde{n}})$ 
  such that $|\inner{\widetilde{\rmT}}|=0$, 
  a $\widehat{\rmT}\in\Delta(\rmC_{\widehat{n}})$ satisfying 
  condition~\ref{it:lemtwo} of Lemma~\ref{mainlemma},
  basic operations 
  $\rmR_1,\ldots,\rmR_{l}\in\{\rmW,\rmZ\}$ where $l\geq 1$, and
  $n = \widehat{n}+ \widetilde{n} + l -1$ such that
\begin{equation*}
\rmT = \rmR_{l}\circ \cdots \circ\rmR_{1}(\widetilde{\rmT}\bullet\widehat{\rmT})\,.
\end{equation*}
By an argument similar to the one used to handle the $m=1$ case, 
  we have that
\begin{equation*}
\degen{\rmT} = \one^{t}\cdot\calR_{l}\cdots\calR_{2}\cdot\calR_{1}\cdot
  \vv_{\,\widetilde{\rmT}\bullet\widehat{\rmT}}(\bottom{\widetilde{\rmT}\bullet\widehat{\rmT}})\,.
\end{equation*}
Since $\widetilde{\rmT}\in\Delta(\rmC_{\widetilde{n}})$ is such that 
  $|\inner{\widetilde{\rmT}}|=0$, by  Theorem~\ref{caso2} we have that
\begin{equation*}
\vv_{\,\widetilde{\rmT}}(\bottom{\widetilde{\rmT}}) = 
\left(\begin{array}{cccc}
  F_{\widetilde{n}-1} \\
  F_{\widetilde{n}} \\
  F_{\widetilde{n}} \\
  F_{\widetilde{n}-1}
\end{array}\right)\,. 
\end{equation*}
Let $\widehat{x}$ and $\widehat{y}$
  denote $\vv_{\,\widehat{\rmT}}(\bottom{\widehat{\rmT}})[\texttt{++}]$
  and $\vv_{\,\widehat{\rmT}}(\bottom{\widehat{\rmT}})[\texttt{+-}]$ respectively.
Observe that~(\ref{igual}) implies that
  $\vv_{\,\widehat{\rmT}}(\bottom{\widehat{\rmT}})[\texttt{-+}]=\widehat{y}$
  and $\vv_{\,\widehat{\rmT}}(\bottom{\widehat{\rmT}})[\texttt{--}]=\widehat{x}$.
Hence, by Proposition~\ref{hola},
\begin{equation*}
\vv_{\,\widetilde{\rmT}\bullet\widehat{\rmT}}(\bottom{\widetilde{\rmT}\bullet\widehat{\rmT}}) 
  = \left(\begin{array}{cccc}
  F_{\widetilde{n}-1} \\
  F_{\widetilde{n}} \\
  F_{\widetilde{n}}\\
  F_{\widetilde{n}-1}
  \end{array}\right)\bullet
  \left(\begin{array}{cccc}
  \widehat{x} \\
  \widehat{y} \\
  \widehat{y}\\
  \widehat{x}
  \end{array}\right) =
  \left(
\begin{array}{cccc}
 \widehat{y}F_{\widetilde{n}} \\
 \widehat{x}F_{\widetilde{n}}+\widehat{y}F_{\widetilde{n}-1} \\
 \widehat{x}F_{\widetilde{n}}+\widehat{y}F_{\widetilde{n}-1} \\
 \widehat{y}F_{\widetilde{n}}
\end{array}\right)\,.
\end{equation*}
Denoting 
  $\vv = \vv_{\,\widetilde{\rmT}\bullet\widehat{\rmT}}(\bottom{\widetilde{\rmT}\bullet\widehat{T}})$ 
  we again observe that~(\ref{igual}) implies 
  that $\Pi\cdot\vv= \vv$ and $\Pi\cdot\calZ^{q}\cdot\vv =\calZ^{q}\cdot\vv$
  for all $q\geq 0$. 
Putting everything together we conclude that
\begin{eqnarray*}
\degen{\rmT} & = & \one^{t}\cdot\calZ^{l}\cdot
\left(
\begin{array}{cccc}
 \widehat{y}F_{\widetilde{n}} \\
 \widehat{x}F_{\widetilde{n}}+\widehat{y}F_{\widetilde{n}-1} \\
 \widehat{x}F_{\widetilde{n}}+\widehat{y}F_{\widetilde{n}-1} \\
 \widehat{y}F_{\widetilde{n}}
\end{array}\right) \\
& = &
  2(\widehat{x}F_{l+2}F_{\widetilde{n}}+\widehat{y}(F_{l+1}F_{\widetilde{n}}
    +F_{l+2}F_{\widetilde{n}-1}))\,.
\end{eqnarray*}
The lower bound for Fibonacci numbers given in~(\ref{bounds})
  and the fact that $2>\varphi$
  imply that
\begin{eqnarray*}
\degen{\rmT} 
& \geq & 
2\left(\widehat{x}\varphi^{l+\widetilde{n}-2}+2\widehat{y}\varphi^{l+\widetilde{n}-3}\right) \\
& \geq & 2(\widehat{x}+\widehat{y})\varphi^{l+\widetilde{n}-2}\,. 
\end{eqnarray*}
Recalling that $\degen{\widehat{\rmT}} = 2(\widehat{x}+\widehat{y})$
  and observing that conditions 1 and 2 of Lemma~\ref{mainlemma} 
  guarantee that  $|\inner{\widehat{\rmT}}|$ is equal to $m-1$ or  
  $m-2$, from the inductive hypothesis we obtain that
  $\degen{\widehat{\rmT}}\geq \varphi^{\widehat{n}-(m-1)}$.
It follows that
  $\degen{\rmT} \geq 
    \varphi^{\widehat{n}+\widetilde{n}+l- 2 - (m-1)}
    = \varphi^{n-m}$.
This concludes the inductive prove of the claim.
Lemma~\ref{o2} immediately implies the desired result.
\end{prooff}

\section{Conclusion}\label{sec:conclusion}
We have established that the number of satisfying states of any triangulation of 
  a convex $n$-gon es exponential in $n$.
It would be of interest to generalize this result to more general
  triangulations.
Two natural cases to address next are triangulations that are embedable 
  over low genus surfaces and $k$-trees.


\bibliographystyle{alpha}
\bibliography{biblio}

\end{document}
 